\DeclareMathOperator{\Div}{div}
\newcommand{\lsim}{{\;\raise0.3ex\hbox{$<$\kern-0.75em\raise-1.1ex\hbox{$\sim$}}\;}}
\newcommand{\gsim}{{\;\raise0.3ex\hbox{$>$\kern-0.75em\raise-1.1ex\hbox{$\sim$}}\;}}
\newcommand{\hole}[1]{
\ifthenelse{\boolean{shownotes}}%
{\begin{center} \fbox{ \rule {.25cm}{0cm}
\rule[-.1cm]{0cm}{.4cm} \parbox{.85\textwidth}{\begin{center}
\texttt{#1}\end{center}} \rule {.25cm}{0cm}}\end{center}}
{}
}
\newcommand{\curl}{\mathop{\mathrm {curl}}}
\newtheorem{theorem}{Theorem}[section]
\newtheorem{lemma}[theorem]{Lemma}
\newtheorem{definition}[theorem]{Definition}
\theoremstyle{remark}
\newtheorem{remark}[theorem]{Remark}
\numberwithin{equation}{section}
\keywords{viscoelasticity, hyperbolic-parabolic systems, strain-gradient theories, asymptotic limits}
\subjclass[????]{74H20, 74N20, 35Mxx, 35G25, 35Q74}
\begin{document}

\title[\empty]{Asymptotic Limits for strain-gradient viscoelasticity with nonconvex energy}
\author[A. AlNajjar]{Aseel AlNajjar}
\address[Aseel AlNajjar]{\newline
Computer, Electrical and Mathematical Science and Engineering Division 
\newline
King Abdullah University of Science and Technology (KAUST)
\newline
Thuwal 23955-6900,  Saudi Arabia
}
\email{aseel.alnajjar@kaust.edu.sa}
\author[S. Spirito]{Stefano Spirito}
\address[Stefano Spirito]{\newline 
Department of Information Engineering, Computer Science and Mathematics
\newline
University of L'Aquila
\newline
Via Vetoio
\newline
I-67100 Coppito (L'Aquila), 
Italy
}
\email{stefano.spirito@univaq.it}
\author[A.E. Tzavaras]{Athanasios E. Tzavaras}
\address[Athanasios E. Tzavaras]{
\newline
Computer, Electrical and Mathematical Science and Engineering Division 
\newline
King Abdullah University of Science and Technology (KAUST)
\newline 
Thuwal 23955-6900,  Saudi Arabia
}
\email{athanasios.tzavaras@kaust.edu.sa}
\begin{abstract}
We consider the system of viscoelasticity with higher-order gradients and nonconvex energy in several space dimensions. We establish 
the asymptotic limits when the viscosity $\nu\rightarrow 0$ or when the dispersion coefficient $\delta \rightarrow 0$. For the latter problem, it is worth noting that, for the case of two space dimensions, we also establish a rate of convergence. This result bears analogies to a result of  Chemin \cite{jean1996remark} on the rate of convergence of the zero-viscosity limit  for the two-dimensional Navier-Stokes equations with bounded vorticity.  
\end{abstract}
\maketitle
\section{Introduction}
\noindent
We study the asymptotic limits for the following hyperbolic-parabolic system of viscoelasticity with second-order gradients, or as we shall call it, strain-gradient viscoelasticity,
\begin{equation*}
\tag{VE$_{\nu \delta}$}  
\begin{aligned}
\partial_t u - \Div S(F)&=  \nu\Delta u -\delta \nabla \Delta F 
\\
\partial_t F &= \nabla u
\\
\curl F &= 0  \, ,
\end{aligned}
\label{eqn:Visco}
\end{equation*}
\\
on the periodic domain $\mathbb{T}^d$ for $d=2,3$, with initial data at $\{t=0\} \times \mathbb{T}^d$ given by 
$$
u |_{t=0} = u_0, \hspace{0.2cm}  F|_{t=0} = F_0   \hspace{0.2cm} \mbox{with $\curl F_0 = 0$} \, .
$$
The velocity $u = u^{\nu,\delta}$ and deformation gradient $F = F^{\nu,\delta}$  determine the solution 
$(u^{\nu, \delta}, F^{\nu, \delta})$ of the system \eqref{eqn:Visco} and will
depend on the viscosity $\nu > 0$ and surface tension $\delta > 0$ parameters. 
In this work we study the asymptotic limits of the system \eqref{eqn:Visco} as $\nu \to 0$ or as $\delta \to 0$.
We note that \eqref{eqn:Visco} is equivalent to a second order equation
\begin{equation}
	\label{2nd-evolution}
\frac{\partial^2y}{\partial t^2}-\Div S(\nabla y)-\nu \Delta y_t +\delta \Delta^2y =0 \, ,
\end{equation}
describing the evolution of a motion $y:(0,T) \times \mathbb{T}^d \rightarrow \mathbb{R}^d$, where $\mathbb{T}^d$ is the torus in
$d = 2$ or $3$ dimensions and $\nu > 0$, $\delta > 0$ are positive constants. Setting 
 $$
 \begin{aligned}
 u = \partial_t y  \;  &:  \; (0,T) \times \mathbb{T}^d \rightarrow \mathbb{R}^d,
 \\
F = \nabla y \; &: \; (0,T) \times \mathbb{T}^d \rightarrow \mathbb{M}^{d\times d} \, ,
\end{aligned}
$$
one sees that $(u,F)$ solves \eqref{eqn:Visco}. The equivalence of \eqref{eqn:Visco} and \eqref{2nd-evolution} 
is assured,  for data satisfying $\curl F_0 = 0$,  because the constraint  $\curl F = 0$ is an involution propagated from the initial data 
and ensuring that $F$ is a gradient.
For the initial data we assume that  they may depend on the parameters $\nu, \delta$, 
that $\curl F_0^{\nu, \delta} = 0$, and that they have the regularity
\begin{equation}
    \tag{ID$_{1}$}
    u^{\nu, \delta}_0 \in L^2(\mathbb{T}^d),\hspace{0.1cm} F_0^{\nu,\delta} \in L^p(\mathbb{T}^d), 
    \hspace{0.1cm}\nabla F_0^{\nu,\delta} \in L^2 (\mathbb{T}^d) \, .
    \label{eq:ID}
\end{equation}

The Piola-Kirchhoff stress tensor in \eqref{2nd-evolution} has the form
\begin{equation}\label{constrel}
T_R = S(F) + \nu F_t - \delta \Delta F \, , \quad S(F) = \frac{\partial W(F) }{\partial F}
\end{equation}
and depends on the strain, strain-rate and strain-gradient;  the elastic stress $S(F)$ is determined as the gradient of a stored energy function $W(F)$.  
The incorporation of higher-order gradients allows for models of phase transitions for which a crucial mathematical feature is the lack of convexity of the stored energy function.  As a model in mechanics, the constitutive relation \eqref{constrel} has some deficiencies:
it is not frame indifferent \cite{antman98} and it will not satisfy the conservation of angular momentum.  More importantly, we do not account for the constraint $\det F > 0$ that is necessary to interpret $y$ as a mechanical motion. Nevertheless, we will use \eqref{2nd-evolution} as a mathematical 
template to study the rudiments of the asymptotic limits as the viscosity $\nu \to 0$ or the coefficient of the higher order gradients $\delta \to 0$.

The one-dimensional case of \eqref{eqn:Visco} has been extensively studied in connection to the problem of
phase transitions in continuum mechanics. The lack of monotonicity of the stress induces an instability mechanism and the
higher-gradients serve as a mechanism for penalizing the phase boundaries. 
Serrin \cite{serrin1980phase}  employed the Korteweg theory of capillarity \cite{johannes1901korteweg} to derive equilibrium conditions
for liquid-gas phase transitions in a van-der-Waals fluid, while Slemrod \cite{slemrod1983admissibility}
introduced the viscosity-capillarity admissibility criterion for the selection of admissible propagating phase boundaries.  
The reader is referred to \cite{Truski1982, AK1991} for a discussion of the admissibility issues for phase transitions, to \cite{boldrini1986elasticity,hagan1983viscosity,slemrod1983admissibility} for analysis of traveling waves in connection to viscosity-capillarity criterion, and to \cite{hattori1986riemann,shearer1986nonuniqueness} for the solution of the Riemann problem. The time asymptotics of the viscoelasticity system with non-convex potentials are an intricate problem studied in \cite{andrews1982asymptotic} in one-space dimension, and in \cite{rybka1998convergence,hoffmann2000convergence}  for scalar-valued motions in multi-d models.

A related widely studied model in fluid mechanics is the compressible Navier-Stokes-Korteweg system \cite{burtea2022existence, bresch2019navier, antonelli2019compactness, antonelli2022global}. The vanishing capillarity limit has been studied in \cite{bian2014vanishing} using compactness methods. 
The vanishing viscosity and capillarity limits for the Navier-Stokes-Korteweg are studied in \cite{GL2016} for the one-dimensional case.
All these models fit under the auspices of the general theory proposed by Korteweg \cite{johannes1901korteweg}.

The system \eqref{eqn:Visco} is a diffusive-dispersive approximation of the equations of  nonlinear (hyper)-elasticity.
The main objective of this work is to study the asymptotic limits as the dispersion $\delta \rightarrow 0$ or as the diffusion $\nu \rightarrow 0$. 
Regarding existence of solutions, it is known that under the parameter constraint $\delta = O(\nu^\gamma)$ with $\gamma \geq 2$, the system \eqref{eqn:Visco} can be put into the form of an equivalent fully-parabolic system by setting  $\omega = u -\kappa \Div F$. When the parameter $\kappa$ is appropriately selected
\eqref{eqn:Visco} reduces to 
\begin{equation}
    \begin{cases}
    \partial_t \omega - \Div S(F) = (\nu-\kappa) \Delta \omega\\
    \partial_t F -\nabla \omega = \kappa \Delta F\\
    \curl F = 0.
    \end{cases}
    \label{parabolic}
\end{equation}
The selection of a real $\kappa$ in the range $0 < \kappa < \nu$ requires $\delta = A \nu ^{\gamma}, \gamma \geq 2$.
The transformation, proposed in \cite{slemrod1983admissibility} for single dimension and generalized in \cite{koumatos2020existence} for the multidimensional case, requires that $\nabla F \in L^2$ in order to apply the theory of parabolic systems to \eqref{parabolic} and provides existence
 only in the range $\delta = O(\nu^\gamma)$, $\gamma \geq 2$.

We will proceed here by a careful analysis of the dissipative properties of \eqref{eqn:Visco} on the full range of parameters $\nu, \delta$.
We assume that $W$ is semiconvex (see (H3) in Section \ref{sec2}) and derive in Section \ref{AppA}
the main estimates for  \eqref{eqn:Visco}:  (i) a standard energy estimate
 \begin{equation}\label{intro:energy}
    \frac{d}{dt}\int_{\mathbb{T}^d} \frac{1}{2} |u^{\nu,\delta}|^2 + W(F^{\nu,\delta}) +\frac{\delta}{2} |\nabla F^{\nu,\delta}|^2 dx + \int_{\mathbb{T}^d}\nu| \nabla u^{\nu,\delta}|^2 dx =0 \, , 
\end{equation}
and (ii) a more subtle estimate capturing the full dissipation structure of the form
\begin{multline}\label{intro:dissstr}
   \frac{d}{dt} \int_{\mathbb{T}^d}\Bigg( \frac{1}{2}|u^{\nu,\delta}-\frac{\nu}{2} \Div F^{\nu,\delta}|^2 + \frac{\nu^2}{8} |\Div F^{\nu,\delta}|^2 + W(F^{\nu,\delta}) + \frac{\delta}{2}|\nabla F^{\nu,\delta}|^2\Bigg) dx \\+ \frac{\delta\nu}{2} \int_{\mathbb{T}^d} |\Delta F^{\nu,\delta}|^2 dx + \frac{\nu}{2} \int_{\mathbb{T}^d} D^2 \Tilde{W} : (\nabla F^{\nu,\delta}, \nabla F^{\nu,\delta}) + |\nabla u^{\nu,\delta}|^2 dx \leq \frac{K}{2} \int_{\mathbb{T}^d} |\nabla F^{\nu,\delta}|^2 \, .
\end{multline}
where $\Tilde{W}$ is convex. The estimation is based on a transfer of dissipation from momentum to strain gradients that uses a bracket motivated by compensated compactness.
It captures the full dissipative structure  of the problem and is a key ingredient in the analysis of asymptotic limits.
The existence theory requires data with additional regularity 
$F_0^{\nu,\delta}\in H^1(\mathbb{T}^d)$
and leads to global existence of weak solutions of \eqref{eqn:Visco} for the full range of parameters $\nu, \delta$. It is based on 
\eqref{intro:energy}-\eqref{intro:dissstr}
and a standard  Faedo-Galerkin approximation, see Theorem \ref{thm1} and Appendix \ref{AppB}.

In the limit $\delta \rightarrow 0$, one formally obtains the system of viscoelasticity of Kelvin-Voigt type,  a well-studied system for nonconvex energies with existence in  {\it e.g.}  \cite{Rybka1992,friesecke1997implicit, Demoulini00, koumatos2020existence} and references therein. Uniqueness is known for smooth solutions, and strong  solutions in two dimensions \cite{koumatos2020existence}. We prove here a rate of  convergence result,
\begin{equation}
   \|F^\delta (\cdot, t) -\overline{F} (\cdot, t) \|_r\leq  \big( C \delta^{\frac{r}{2}} \big)^{\exp(-C_2 t)}    \, ,
    \label{intro:rate}
\end{equation}
between solutions $(u^\delta,F^\delta)$ of the strain-gradient viscoelasticity system and $(\bar u,\bar F)$ of viscoelasticity of Kelvin-Voigt type,
in some $L^r$-norm, see Theorem \ref{thm3.1}.
The proof follows the approach of Yudovich \cite{yudovich1963non},  as adapted by Chemin \cite{jean1996remark} for the inviscid limit of the two-dimensional Navier-Stokes equations with bounded vorticity and Larios-Lunasin-Titi \cite{LLT2013} for the 2-d Boussinesq system with no heat-conduction.
This approach is based on a stability estimate exploiting  Osgood's lemma \eqref{osgood}  (rather than on Gr\"{o}nwall's lemma)
and employing results from the critical Sobolev embedding $H^1 \hookrightarrow L^p$ for $p < \infty$ valid in two dimensions. 
The rate of convergence estimate \eqref{intro:rate} complements the uniqueness proof of \cite{koumatos2020existence} and reveals the 2-d version of  \eqref{eqn:Visco} as a system where  the Yudovich idea may be applied to yield uniqueness and stability.

 Finally, for completeness, we consider the convergence of the strain gradient viscoelasticity to Kelvin-Voigt viscoelasticity in higher dmensions,
 and also the convergence as $\nu \rightarrow 0$ from strain-gradient viscoelasticity to the dispersive system of strain-gradient elasticity,
 using compactness methods. These results pinpoint the efficiency of the dissipative structure \eqref{intro:dissstr} in handling the
 respective limiting cases, and are achieved by combining compactness methods with the Aubin-Lions-Simon \cite{simon1986compact} 
 time-compactness framework; see  Theorems \ref{thm4.1} and \ref{thm5.1}, respectively.

The organization of the work goes as follows:  
We start in Section \ref{AppA} by presenting the formal derivation of the estimate \eqref{intro:dissstr}
which captures the dissipative structure of the system.
In Section \ref{sec2}, we state the definition of weak solutions and 
a theorem on existence of weak solutions for initial data $(u^{\nu,\delta}, F^{\nu,\delta}) \in L^2(\mathbb{T}^d)\times H^1(\mathbb{T}^d)$,
see Theorem \ref{thm1}. 
The proof of this theorem is based on Faedo-Galerkin approximation and the formal derivation of the estimate \eqref{intro:dissstr} and is
deferred  to Appendix \ref{AppB}. In Section \ref{sec3}, we establish the rate of convergence in the two-dimensional case. 
The limit as $\delta \rightarrow 0$ is studied in Section \ref{sec4} using compactness methods, and it 
provides an existence theory for the limiting system with initial data satisfying \eqref{eq:ID}.
In Section \ref{sec5}, we establish the limit as the viscosity coefficient $\nu \rightarrow 0$ to a system describing dynamics of
nonlinear elastic materials with strain-gradients and  nonconvex energies. Appendix \ref{AppB} contains the proof of Theorem \ref{thm1},
while Appendix \ref{AppC} contains some auxiliary results used in Section \ref{sec3}.

\section{Derivation of the dissipative structure estimate}\label{AppA} 
In this section we establish the formal derivation of the a-priori estimate \eqref{intro:dissstr} that captures the dissipative structure of \eqref{eqn:Visco},
which expressed in indices reads
\begin{equation}
\begin{cases}
\partial_t u_i = \partial_{\alpha} S_{i\alpha}(F) + \nu \Delta u_i - \delta \partial_{\alpha} \Delta F_{i\alpha}\\ 
\partial_{t}F_{i\alpha} = \partial_{\alpha} u_i\\
\partial_\alpha F_{i\beta} - \partial_{\beta} F_{i\alpha}=0
\end{cases}
\label{eqn:indices}
\end{equation}

To ease the presentation, we use the notation $(u^{\nu,\delta}, F^{\nu,\delta})\equiv (u,F)$.  Recall the energy estimate for the system \eqref{eqn:Visco}\\
  \begin{equation}
   \frac{d}{dt} \int_{\mathbb{T}^d} \frac{1}{2} |u|^2 + W(F) +\frac{\delta}{2} |\nabla F|^2 dx + \nu\int_{\mathbb{T}^d}| \nabla u|^2 dx  =0.
   \label{energy-app}
\end{equation}
\\\\
To derive the dissipative structure estimate \eqref{eqn:transfer}, we first multiply $\eqref{eqn:indices}_1$ by $\partial_\beta F_{i\beta}$ and add $ -|\nabla u|^2 = -\big(\partial_\alpha u_i\big)\big(\partial_\alpha u_i\big)$ on both sides. Then
\begin{align*}
   \Big(\partial_\alpha S_{i\alpha}\Big)\Big(\partial_\beta F_{i\beta}\Big)-\Big(\partial_\alpha u_i\Big)\Big(\partial_\alpha u_i\Big) &= \Big(\partial_{\beta}F_{i\beta}\Big)\Big(\partial_t u_i - \nu\Delta  u_i + \delta \partial_\alpha \Delta F_{i\alpha}\Big) - \Big(\partial_\alpha u_i\Big)\Big(\partial_\alpha u_i\Big)\\
   &=\begin{multlined}[t]
   \Big(\partial_\beta F_{i\beta}\Big) \Big(\delta \partial_\alpha \Delta F_{i\alpha}\Big) + \partial_t \Big(u_i \partial_\beta F_{i\beta}\Big)-u_i \Big(\partial_t \partial_\beta F_{i\beta}\Big)  \\ -\Big(\nu\partial_\beta F_{i\beta}\Big)\Delta u_i-\Big(\partial_\alpha u_i\Big)\Big(\partial_\alpha u_i\Big)
    \end{multlined}\\
    &= \begin{multlined}[t]\Big(\partial_\beta F_{i\beta}\Big) \Big(\delta \partial_\alpha \Delta F_{i\alpha}\Big) + \partial_t \Big(u_i \partial_\beta F_{i\beta}\Big)-u_i \Big(\partial_t \partial_\beta F_{i\beta}\Big) \\ -\Big(\nu\partial_\beta F_{i\beta}\Big)\Big(\partial_t \Div F\Big)-\Big(\partial_\alpha u_i\Big)\Big(\partial_\alpha u_i\Big)\end{multlined}\\
    &=\begin{multlined}[t] \partial_t \Big(u_i \partial_\beta F_{i\beta}\Big)-\partial_\alpha \Big(u_i \partial_t F_{i\alpha}\Big)-\nu\partial_t \frac{1}{2}|\partial_\beta F_{i\beta}|^2 \\+ \partial_\alpha \Big(\partial_\beta F_{i\beta}\delta \Delta F_{i\alpha}\Big)-\Big(\delta \partial_\alpha \partial_\beta F_{i\beta}\Big) \Delta F_{i\alpha} .
    \end{multlined}
\end{align*}
We obtain the identity
\begin{multline*}
\partial_t \Big(\frac{\nu}{2}|\partial_\beta F_{i\beta}|^2-u_i \partial_\beta F_{i\beta}\Big)+ \partial_\alpha\Big(u_i \partial_t F_{i\alpha}-\partial_\beta F_{i\beta}\delta \Delta F_{i\alpha}\Big)+\\ \big(\delta \partial_\alpha \partial_\beta F_{i\beta}\big)\Delta F_{i\alpha} + \big(\partial_\alpha S_{i\alpha}\big)\big(\partial_\beta F_{i\beta}\big)-\big(\partial_\alpha u_i)(\partial_\alpha u_i\big)=0,  
\end{multline*}
which is reminiscent of the identities used in compensated compactness. \\
Next, integrating over the torus gives
\begin{equation}
    \frac{d}{dt}\int_{\mathbb{T}^d} \Big(\frac{\nu}{2}|\Div F|^2 - v \Div F\Big)dx + \delta \int_{\mathbb{T}^d}|\Delta F|^2 dx + \int_{\mathbb{T}^d} \Big(\Div S(F) \Div F - |\nabla u|^2\Big) dx =0,
    \label{eqn:E}
    \end{equation}\\
while adding \eqref{energy-app} to a  $\frac{\nu}{2}$ multiple of \eqref{eqn:E} yields
\begin{equation*}
     \frac{d}{dt}\int \Bigg( \frac{1}{2}|u-\frac{\nu}{2} \Div F|^2 + \frac{\nu^2}{8} |\Div F|^2 + W(F) +\frac{\delta}{2} |\nabla F|^2\Bigg) dx + \frac{\delta\nu}{2}\int  |\Delta F|^2 dx  + J = 0,\\
\end{equation*}
where $J$ is given by
\begin{align*}
    J &:= \frac{\nu}{2}\int \Div S(F) \Div F + |\nabla u|^2 \\
    &= \frac{\nu}{2} \int \frac{\partial ^2 W}{\partial F_{i\alpha}\partial F_{k\gamma}}\partial_\beta F_{k\gamma}\partial_\alpha F_{i\alpha} +|\nabla u|^2\\
    &=\frac{\nu}{2} \int \frac{\partial ^2 W}{\partial F_{i\alpha}\partial F_{k\gamma}}\partial_\alpha F_{k\gamma}\partial_\beta F_{i\alpha} +|\nabla u|^2\\
    &= \frac{\nu}{2} \int D^2 W: (\nabla F, \nabla F) + |\nabla u|^2.
\end{align*}
\\
Substituting back and using (H3) and the notation $\tilde W (F) = W(F) + \tfrac{K}{2} |F|^2$ results to the identity capturing the full dissipative structure:
\begin{multline*}
    \int_{\mathbb{T}^d}\bigg( \frac{1}{2}|u-\frac{\nu}{2} \Div F|^2 + \frac{\nu^2}{8} |\Div F|^2 + W(F) + \frac{\delta}{2}|\nabla F|^2\bigg) dx 
    \\
    + \frac{\delta\nu}{2} \int_0^t\int_{\mathbb{T}^d} |\Delta F|^2 dxds + \frac{\nu}{2} \int_0^t\int_{\mathbb{T}^d} D^2 \Tilde{W} : (\nabla F, \nabla F) + |\nabla u|^2 dxds 
    \\
    \leq \frac{K}{2} \int_{\mathbb{T}^d} |\nabla F|^2 
    + \int_{\mathbb{T}^d}\bigg( \frac{1}{2}|u_0-\frac{\nu}{2} \Div F_0|^2 + \frac{\nu^2}{8} |\Div F_0|^2 + W(F_0) + \frac{\delta}{2}|\nabla F_0|^2\bigg) dx.
\end{multline*}

\section{Existence theory for the system of viscoelasticity with strain-gradient dependence}
\label{sec2}
We next present the precise statements of results:
\subsection*{Notation} We will use the customary Lebesgue spaces $L^p(\mathbb{T}^d)$ and Sobolev spaces $W^{k,p}(\mathbb{T}^d)$. The norms are denoted by $\|\cdot\|_p$ and $\|\cdot\|_{W^{k,p}}$, respectively, and we use the standard convention  $H^s(\mathbb{T}^d):=W^{s,2}(\mathbb{T}^d)$. The Banach-valued space $L^p(0,T;X)$ is the classical Bochner space with norm denoted by $\|\cdot\|_{L^{p}(X)}$. We assume, without loss of generality, that $|\mathbb{T}^d| =1$. The notation $x_j \in_{b} X$ will indicate that  the sequence $\{ x_j \}_{j \in \mathbb{N}}$ is a uniformly bounded sequence
 in $X$. Finally, constants depending on fixed parameters like the dimension, domain or fixed exponents will be suppressed and  we will use the notation $A \lsim B$ to mean $A \leq C B$ for some constant $C >0$.

\subsection*{Assumptions on the Stored Energy Function $W(F)$}
Throughout $p \geq 2$, and the stored energy function is assumed to satisfy: 
\begin{enumerate}
    \item [(H1)] $W \in C^3(\mathbb{R}^{d\times d}; \mathbb{R}).$
    \item[(H2)] There exist constants $c > 0$ and $C > 0$ such that 
    \begin{equation*}
        c \big(|F|^p -1\big) \leq W(F) \leq C \big(|F|^p + 1\big),\hspace{0.2cm} p \geq 2.
    \end{equation*}
     \item [(H3)] $\Tilde{W} := W(F) + \frac{K}{2} |F|^2$ is convex for some $K > 0$. \\
\end{enumerate}

\begin{remark} (H2) and (H3) imply that 
\begin{equation}\label{growthderiv}
    \tag{H4}
    |S(F)| \leq C \big(1+|F|^{p-1}\big), \hspace{0.2cm }\text{for some}\hspace{0.2cm} C > 0.    
\end{equation}
Note that an equivalent set of assumptions is (H1), (H2), (H4) and 
\begin{equation}
    \tag{H5}
   \Big(S(F_1)-S(F_2), F_1-F_2\Big) \geq -K |F_1-F_2|^2 \hspace{0.2cm} \text{for} \hspace{0.2cm} K >0,
\end{equation}
where$(A,B)= \text{tr} \hspace{0.1cm}AB^{T}$. A proof of this fact can be found in \cite{friesecke1997implicit}.\\\\
A strengthened assumption will be imposed in some cases, namely that
\begin{equation}
    \tag{H6} 
    \Big(S(F_1)-S(F_2), F_1-F_2\Big) \geq \Big(C\big(|F_1|^{p-2}+|F_2|^{p-2}\big)-K\Big)|F_1-F_2|^2,
\end{equation}
for some $C>0$ and $K > 0$. (H6) implies that 
\begin{equation}
    \tag{H7}
    D^2 \Tilde{W}(F) \geq c |F|^{p-2} \mathbb{I},
\end{equation}
for some $c>0$. We refer to \cite{koumatos2020existence} for the proof.\\
\end{remark}

\begin{remark}
    The condition (H3) is often referred to as semiconvexity in the literature. The equivalent form (H5) is usually referred to  as Andrews-Ball condition, introduced in \cite{andrews1982asymptotic}.
\end{remark}
\begin{definition}[Weak solution]
We say $(u^{\nu,\delta},F^{\nu,\delta})$
is a weak solution of \eqref{eqn:Visco} for the initial data \eqref{eq:ID} if it satisfies the following:
\begin{enumerate}
    \item  The integrability conditions 
    \begin{equation*}
       u^{\nu,\delta} \in C\big(0,T; L^{2}(\mathbb{T}^d)\big) \cap L^2\big(0,T; H^1(\mathbb{T}^d)\big),
    \end{equation*}
    \begin{equation*}
        F^{\nu,\delta} \in C\big(0,T; L^p(\mathbb{T}^d)\cap H^1(\mathbb{T}^d)\big) \cap L^2\big(0,T;  H^1(\mathbb{T}^d)\cap H^2 (\mathbb{T}^d) \big), \hspace{0.2cm} p \geq 2,\hspace{0.1cm} \delta >0
    \end{equation*}
    \begin{equation*}
        y^{\nu,\delta} \in C\big(0,T; L^2(\mathbb{T}^d)\big)\cap L^\infty \big(0,T; H^1(\mathbb{T}^d)\cap H^2(\mathbb{T}^d)\big) \cap H^1(0,T; L^2(\mathbb{T}^d)).
    \end{equation*}
    
    \item The differential equations \eqref{eqn:Visco} are satisfied in the weak form
    \begin{equation*}
    \begin{aligned}
        \iint u^{\nu,\delta} \phi_t dxdt  &- \iint S(F^{\nu,\delta})\nabla \phi dxdt - \nu \iint \nabla u^{\nu,\delta}\nabla \phi dxdt
        \\
        &+ \delta \iint \Delta F^{\nu,\delta}\nabla \phi dxdt + \int u_0^{\nu,\delta}\phi(0,x) dx = 0
        \end{aligned}
    \end{equation*}
     \begin{equation*}
        \iint F^{\nu,\delta}\psi_t dxdt + \iint \nabla u^{\nu,\delta}\psi dxdt -\int F_0^{\nu,\delta}(x)\psi(0,x)dx = 0
    \end{equation*}
    \begin{equation*}
        F^{\nu,\delta}(t,x) = \nabla y(t,x)\hspace{0.2cm} \text{for a.e.}\hspace{0.2cm}(t,x)\in (0,T)\times \mathbb{T}^d.
    \end{equation*}
    for $\psi, \phi \in C^{\infty}\big([0,T); C^{\infty}(\mathbb{T}^d)\big)$.
    \item The energy inequality is satisfied for a.e. $t\in (0,T)$ 
     \begin{equation}
     \begin{aligned}
    \int_{\mathbb{T}^d} \frac{1}{2} |u^{\nu,\delta}|^2 + W(F^{\nu,\delta}) + \frac{\delta}{2} |\nabla F^{\nu,\delta}|^2 dx 
    &+ \int_0^t\int_{\mathbb{T}^d}\nu| \nabla u^{\nu,\delta}|^2 dx ds 
    \\
    &\leq \int_{\mathbb{T}^d}  \frac{1}{2} |u^{\nu,\delta}_0|^2 + W(F^{\nu,\delta}_0) +\frac{\delta}{2} |\nabla F^{\nu,\delta}_0|^2 dx. 
    \end{aligned}
    \label{eqn:energy}
\end{equation}
\item Dissipative Structure. For a.e. $t\in (0,T)$ we have 
\begin{multline}
    \int_{\mathbb{T}^d}\bigg( \frac{1}{2}|u^{\nu,\delta}-\frac{\nu}{2} \Div F^{\nu,\delta}|^2 + \frac{\nu^2}{8} |\Div F^{\nu,\delta}|^2 + W(F^{\nu,\delta}) + \frac{\delta}{2}|\nabla F^{\nu,\delta}|^2\bigg) dx 
    + \frac{\delta\nu}{2} \int_0^t\int_{\mathbb{T}^d} |\Delta F^{\nu,\delta}|^2 dxds \\+ \frac{\nu}{2} \int_0^t\int_{\mathbb{T}^d} D^2 \Tilde{W} : (\nabla F^{\nu,\delta}, \nabla F^{\nu,\delta}) + |\nabla u^{\nu,\delta}|^2 dxds \leq \frac{K}{2} \int_0^t\int_{\mathbb{T}^d} |\nabla F^{\nu,\delta}|^2 +\\+ \int_{\mathbb{T}^d}\bigg( \frac{1}{2}|u^{\nu,\delta}_0-\frac{\nu}{2} \Div F^{\nu,\delta}_0|^2 + \frac{\nu^2}{8} |\Div F^{\nu,\delta}_0|^2 + W(F^{\nu,\delta}_0) + \frac{\delta}{2}|\nabla F^{\nu,\delta}_0|^2\bigg) dx.
   \label{eqn:transfer}
\end{multline}
\end{enumerate}
\label{def}
\end{definition}
Note that we have incorporated the a-priori estimates into Definition \ref{def}. This is indeed established as part of the proof in Appendix \ref{AppB}, and is convenient for the study of the asymptotic limits in Section \ref{sec4} and Section \ref{sec5}. \\\\
The next theorem establishes the global existence of weak solutions for fixed $\nu$ and $\delta$. The proof outlined in Appendix \eqref{AppB} is standard and is based on a Faedo-Galerkin approximation. 
\begin{theorem}[Global Existence of Weak Solutions]\label{thm1}
Assume the stored energy $W$ satisfies (H1)-(H3) for some $p \geq 2$. Let $(u_0^{\nu,\delta},F_0^{\nu,\delta})\in L^2(\mathbb{T}^d) \times L^p(\mathbb{T}^d)\cap H^1 (\mathbb{T}^d)$ . Then there exists a weak solution of \eqref{eqn:Visco} in the sense of Definition \eqref{def}. Moreover, if $W$ additionally satisfies (H6) then we also have that 
\begin{equation}
\nabla |F^{\nu,\delta}|^{\frac{p}{2}} \in L^2\big(0,T; L^2(\mathbb{T}^d)\big).
\label{extrereg}
\end{equation}

\end{theorem}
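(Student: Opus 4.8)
The plan is to construct the weak solution via a Faedo--Galerkin scheme and pass to the limit using the a-priori bounds that the excerpt has already derived, namely the energy identity \eqref{energy-app} and the dissipative-structure inequality \eqref{eqn:transfer}; the extra regularity \eqref{extrereg} under (H6)--(H7) will come from feeding the lower bound $D^2\tilde W(F)\ge c|F|^{p-2}\mathbb I$ into that same dissipative estimate. First I would fix $\nu,\delta>0$, pick the trigonometric (eigenfunctions of $-\Delta$ on $\mathbb T^d$) basis, write the Galerkin system for the $d$-vector $u$, recover $F$ from $\partial_t F=\nabla u$ with $F(0)=P_NF_0$ (so that $\curl F=0$ is preserved exactly at the discrete level and $y$ can be reconstructed with $F=\nabla y$), and obtain local existence of the ODE system by Cauchy--Lipschitz, using (H1) so that $S\in C^2$.

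Next I would run the two energy computations at the Galerkin level. Testing the $u$-equation with $u_N$ and using $\partial_tF_N=\nabla u_N$ gives the discrete analogue of \eqref{energy-app}; since $W\ge c(|F|^p-1)\ge -c$ by (H2) this yields uniform bounds $u_N\in_b L^\infty(L^2)$, $F_N\in_b L^\infty(L^p)$, $\sqrt\delta\,\nabla F_N\in_b L^\infty(L^2)$, $\sqrt\nu\,\nabla u_N\in_b L^2(L^2)$, and hence (local in time turning global) existence for all $t\in(0,T)$. Then I would reproduce the compensated-compactness bracket computation of Section \ref{AppA} on the Galerkin subspace — this is legitimate because $\Div F_N,\Delta F_N$ stay in the finite-dimensional space and one may test with $\partial_\beta F_{N,i\beta}$ — to obtain the discrete version of \eqref{eqn:transfer}. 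Using (H3) in the form $D^2\tilde W\ge0$ gives the uniform bounds $\nabla u_N\in_b L^2(L^2)$ (now without the $\sqrt\nu$), $\sqrt{\delta\nu}\,\Delta F_N\in_b L^2(L^2)$, and $\sqrt{\nu}\,(D^2\tilde W(F_N):(\nabla F_N,\nabla F_N))^{1/2}\in_b L^2(L^2)$; the term $\tfrac K2\int_0^t\!\int|\nabla F_N|^2$ on the right is controlled by the $L^\infty(L^2)$ bound on $\nabla F_N$ from the energy estimate (note $\delta$ fixed), so Gr\"onwall is not even needed here. From $\partial_tF_N=\nabla u_N$ and $u_N\in_b L^2(H^1)$ we get $\partial_t F_N\in_b L^2(L^2)$; from the $u$-equation, the worst term is $\Div S(F_N)$, which by (H4) and $F_N\in_b L^\infty(L^p)$ lies in $L^\infty(W^{-1,p'})$, while $\nu\Delta u_N\in_b L^2(H^{-1})$ and $\delta\nabla\Delta F_N\in_b L^2(H^{-2})$, so $\partial_t u_N$ is bounded in a negative-order space; combined with the spatial bounds this gives, via Aubin--Lions--Simon \cite{simon1986compact}, strong compactness of $F_N$ in $C(L^q)$ for $q<p$ and a.e. convergence, which is exactly what is needed to pass to the limit in the nonlinearity $S(F_N)\to S(F)$ in $L^{p'}$ (using (H4) and equi-integrability from the $L^\infty(L^p)$ bound).

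With these convergences I would pass to the limit in the Galerkin weak formulation to get the weak form in Definition \ref{def}(2); the constraint $F=\nabla y$ survives because it is linear and $\curl F_N=0$ for all $N$; the time-continuity statements $u\in C(L^2)$, $F\in C(L^p\cap H^1)$ follow from the bounds on the time derivatives plus interpolation, and the a.e.-in-$t$ energy and dissipation inequalities \eqref{eqn:energy}, \eqref{eqn:transfer} follow from weak lower semicontinuity of the convex quantities (here one uses that $W$ and $\tilde W$ are, modulo the $|F|^2$ correction, controlled so that $\int W(F)\le\liminf\int W(F_N)$ via a.e. convergence and Fatou, and $\int D^2\tilde W:(\nabla F,\nabla F)$ is weakly l.s.c. as the integral of a convex function of $\nabla F$). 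For the final claim \eqref{extrereg}, I would not touch the limit passage but instead revisit the dissipative estimate: under (H6), assumption (H7) gives $\tfrac\nu2\int_0^t\!\int D^2\tilde W:(\nabla F,\nabla F)\ge \tfrac{c\nu}{2}\int_0^t\!\int|F|^{p-2}|\nabla F|^2\gsim \int_0^t\!\int\big|\nabla|F|^{p/2}\big|^2$ (since $\nabla|F|^{p/2}=\tfrac p2|F|^{p/2-1}\nabla|F|$ and $|\nabla|F||\le|\nabla F|$ a.e.), and the left side of \eqref{eqn:transfer} is uniformly bounded in $N$, so $\nabla|F_N|^{p/2}\in_b L^2(L^2)$; weak lower semicontinuity (together with a.e. convergence of $|F_N|^{p/2}$) then transfers this to the limit, giving \eqref{extrereg}.

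The main obstacle I anticipate is not the limit passage in $S(F)$ — that is routine once a.e. convergence of $F_N$ is in hand — but rather justifying the dissipative-structure identity of Section \ref{AppA} at the Galerkin level with full rigor: the bracket computation multiplies the momentum equation by $\partial_\beta F_{i\beta}$ and integrates by parts several times, and on the Galerkin subspace one must check that each integration by parts is exact (it is, since the trigonometric basis is closed under differentiation and $P_N$ commutes with $\Delta$) and that the nonlinear term $\int\Div S(F_N)\,\Div F_N = \int D^2W:(\nabla F_N,\nabla F_N)$ genuinely holds with the projected $S(F_N)$ — one has to be slightly careful because the Galerkin equation only sees $P_N\Div S(F_N)$, but testing against $\Div F_N\in V_N$ removes the projection. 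A secondary technical point is the time-continuity $F\in C(H^1)$ when $\delta>0$: this needs $F\in L^\infty(H^1)\cap$ (something giving weak continuity) upgraded to strong continuity, for which one argues that $t\mapsto\|\nabla F(t)\|_2^2$ is continuous using the energy equality at the limit (not just the inequality), which in turn requires showing no energy is lost — provable here because $\delta,\nu$ are fixed and the dissipation terms are under control, so the limiting solution actually satisfies the energy identity with equality on a dense set of times.
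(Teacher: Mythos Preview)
Your proposal is correct and follows essentially the same route as the paper's proof in Appendix~\ref{AppB}: Faedo--Galerkin on the trigonometric basis, the two energy computations (including the compensated-compactness bracket of Section~\ref{AppA}) at the discrete level, Aubin--Lions--Simon for compactness, Vitali-type argument for $S(F^N)\to S(F)$, and weak lower semicontinuity of the convex integrands to recover \eqref{eqn:energy}--\eqref{eqn:transfer}; the extra regularity under (H6) is obtained exactly as you describe via (H7). Your anticipation of the projection issue---that testing the projected equation against $\Div F_N\in V_N$ removes $P^N$---is precisely how the paper handles \eqref{semiconvexity1}, and your discussion of the time-continuity $F\in C(H^1)$ is in fact more careful than what the paper records.
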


\section{From Strain-Gradient Viscoelasticity to Viscoelasticity; a rate of Convergence in the 2d case}
\label{sec3}
Here we will fix $\nu =1$ and simplify the notation to $(u^{\nu,\delta}, F^{\nu,\delta})\equiv (u^\delta, F^{\delta})$. For the case with zero surface tension $(\delta =0)$ we have the system 
\begin{equation*}
\tag{V}
\begin{cases}
\partial_t \overline{u} = \Div S(\overline{F}) + \Delta \overline{u}\\
\partial_t \overline{F} = \nabla \overline{u}\\\
\curl \overline{F} = 0 
\end{cases}
\label{eqn:elast}   
\end{equation*}
with initial data
$$\Bar{u}|_{t=0} = \overline{u}_0, \hspace{0.2cm }\overline{F}|_{t=0} = \overline{F}_0.$$
The system \eqref{eqn:elast} of viscoelasticity has been studied in \cite{koumatos2020existence}, where existence and uniqueness in the two-dimensional case have been established.
The next theorem shows that, under some restrictions on the growth of $D^2W$ and assuming more regularity on the initial data, we have a rate of convergence from \eqref{eqn:Visco} to \eqref{eqn:elast} in the two-dimensional case.
\begin{theorem}[Rate of convergence from Strain-Gradient Viscoelasticity to Viscoelasticity] \label{thm3.1} 
Let $d=2$ and fix $ T>0$. Assume that $W$ satisfies (H1)-(H3), $F_0^\delta \, ,  \overline{F}_0 \in H^1(\mathbb{T}^2) \cap L^p(\mathbb{T}^2)$ and $u^\delta, \overline{u} \in L^2(\mathbb{T}^d)$.  Let $(u^{\delta}, F^{\delta})$ be a weak solution of \eqref{eqn:Visco} and $(\overline{u}, \overline{F})$ be a weak solution of \eqref{eqn:elast} such that $u^{\delta}_0 = \overline{u}_0$ and $F^{\delta}_0 = \overline{F}_0$. If $W$ additionally satisfies for some $C > 0$ the bound 
\begin{equation}
    |D^2 W(F^{\delta})| \leq C \big(1+|F^{\delta}|^{(p-2)}\big), 
    \tag{H8}
    \label{H8}
\end{equation}
Then, 
\begin{enumerate}
    \item Let $ 2 \leq p < 4$ and select $1 < r < \min \{2, \frac{2}{p-2}\}$. There exist positive constants $C_1> 0$, depending on the initial data, and $C_2 = C_2(r,p) >0$, depending on $r$ and $p$, such that for $t\in [0,T]$ and $\delta$ satisfying
\begin{equation*}
    \delta \leq \frac{\exp\Big\{\frac{4}{r}(1-\exp(C_2 t))\Big\}}{C_1^{2/r}}
\end{equation*}
we have 
\begin{equation}
    \|F^\delta-\overline{F}\|^r_r\leq \Big(C_1 \delta^{\frac{r}{2}}\Big)^{\exp(-C_2 t)} \exp\Big\{2-2\exp(-C_2 t)\Big\}.     
    \label{rate}
\end{equation}

\item If $W$ satisfies the stronger hypothesis (H6), then \eqref{rate} holds for all $p \geq 2$  and for $1 < r < \min \{2, \frac{p}{p-2}\}$.
\end{enumerate}
\end{theorem}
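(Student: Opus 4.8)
The plan is to estimate the difference $G^\delta := F^\delta - \overline{F}$ and $w^\delta := u^\delta - \overline{u}$ directly in a weak-stability fashion, treating the term $\delta \nabla \Delta F^\delta$ as a perturbation and exploiting the semiconvexity of $W$ together with the borderline Sobolev embedding $H^1(\mathbb{T}^2) \hookrightarrow L^q$ for all $q < \infty$. First I would subtract the two systems to get $\partial_t w^\delta = \Div\!\big(S(F^\delta) - S(\overline{F})\big) + \Delta w^\delta - \delta \nabla \Delta F^\delta$ and $\partial_t G^\delta = \nabla w^\delta$, both with zero initial data. Pairing the first equation with $w^\delta$ and integrating over $\mathbb{T}^2$, the viscous term gives $-\|\nabla w^\delta\|_2^2 \le 0$ and can be discarded; the stress term is handled by writing $\int \big(S(F^\delta)-S(\overline F)\big):\nabla w^\delta = \frac{d}{dt}\!\int (\text{something})$ only after combining with the $F$-equation, so instead I would test the $F$-difference equation suitably. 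The cleaner route, following Yudovich--Chemin, is to form the energy-type quantity $\tfrac12\|w^\delta\|_2^2 + $ (a convexified strain term) and use (H5)/(H3): $\big(S(F^\delta)-S(\overline F), G^\delta\big) \ge -K|G^\delta|^2$, which after integrating the coupling $\partial_t G^\delta = \nabla w^\delta$ by parts yields a Grönwall-type inequality for $\|w^\delta\|_2^2 + \|G^\delta\|_2^2$ — but this only gives a $\delta^{1/2}$ estimate in $L^2$ with an exponential constant, which is not the stated $L^r$ Osgood-type rate. The point of the theorem is sharper.

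The key idea is therefore to work in $L^r$ for $r$ slightly less than $2$ and extract from the nonlinearity a modulus-of-continuity estimate that is \emph{super-linear but only logarithmically so}. Concretely, I would multiply the $G^\delta$-evolution (after substituting $w^\delta$) or rather test the momentum equation against an appropriate power $|w^\delta|^{r-2}w^\delta$-type quantity; more precisely, following Larios--Lunasin--Titi and Chemin, one derives $\frac{d}{dt}\|G^\delta\|_r^r \lesssim \int |S(F^\delta)-S(\overline F)|\,|G^\delta|^{r-1}\big(\text{derivative factors}\big)$ and then bounds the stress difference via (H8): $|S(F^\delta)-S(\overline F)| \le C\big(1+|F^\delta|^{p-2}+|\overline F|^{p-2}\big)|G^\delta|$. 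The crucial step is to distribute the exponents by Hölder so that the $|F|^{p-2}$ factor lands in an $L^q$ space controlled by $\|F\|_{H^1}$ via the 2-d critical embedding, with $q = q(r,p)$ finite precisely under the constraint $r < \min\{2, \tfrac{2}{p-2}\}$ (respectively $r < \min\{2,\tfrac{p}{p-2}\}$ under (H6), where (H7) gives the extra integrability $\nabla|F|^{p/2}\in L^2$, i.e. $|F|^{p/2}\in H^1$, hence $|F|^{p}\in L^q$ for all finite $q$). The outcome is an inequality of the schematic form
\begin{equation*}
\frac{d}{dt}\|G^\delta(t)\|_r^r \;\le\; C_2\,\|G^\delta(t)\|_r^r\,\Big(1 + \log\frac{e}{\|G^\delta(t)\|_r^r}\Big) \;+\; C\,\delta^{r/2},
\end{equation*}
where the logarithmic factor arises because the $L^q$-norm of $|G^\delta|$ grows like $q$ (from $H^1\hookrightarrow L^q$ with constant $O(\sqrt q)$) and one optimizes over $q$. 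The $\delta^{r/2}$ inhomogeneity comes from estimating the dispersive defect: testing against $|G^\delta|^{r-2}G^\delta$ produces $\delta\int \Delta F^\delta \cdot \nabla(|G^\delta|^{r-2}G^\delta)$, which is controlled using $\sqrt\delta\,\|\Delta F^\delta\|_{L^2_tL^2_x} \lesssim 1$ from the dissipative estimate \eqref{intro:dissstr} (the term $\frac{\delta\nu}{2}\int|\Delta F|^2$), paired with $\|\nabla F^\delta\|_2 \lesssim \delta^{-1/2}$ from \eqref{intro:energy}; the net power of $\delta$ after Young's inequality is $\delta^{r/2}$.

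Finally I would invoke Osgood's lemma \eqref{osgood} (the reference the authors already earmarked) in place of Grönwall: if $y' \le C_2\, y(1+\log(e/y)) + c\delta^{r/2}$ with $y(0)=0$, then, absorbing the constant inhomogeneity into the initial condition by a shift, one gets $y(t) \le \big(C_1\delta^{r/2}\big)^{\exp(-C_2 t)}\exp\{2 - 2\exp(-C_2 t)\}$, exactly \eqref{rate}; the stated smallness condition on $\delta$ is just what is needed to keep the right-hand side below $1$ so that the logarithm has the right sign and the double-exponential bookkeeping is valid. For part (2), the only change is that (H6)$\Rightarrow$(H7) upgrades the integrability of $|F^\delta|^{p-2}$ (via \eqref{extrereg}, i.e. $\nabla|F^\delta|^{p/2}\in L^2$) from a fixed Lebesgue exponent to every finite one, which relaxes the constraint on $r$ from $\tfrac{2}{p-2}$ to $\tfrac{p}{p-2}$ and removes the restriction $p<4$. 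The main obstacle I anticipate is the careful Hölder bookkeeping in the borderline-Sobolev step: one must track how the $L^q$ constant degenerates as $q\to\infty$ and show the degeneration is exactly logarithmic in $\|G^\delta\|_r$ — getting a power instead of a logarithm there would break the Osgood argument and only yield a Grönwall bound. A secondary technical point is justifying the $L^r$ energy identity for weak solutions of limited regularity (the test function $|G^\delta|^{r-2}G^\delta$ with $r<2$ is only barely admissible), which likely requires a mollification/approximation argument deferred to Appendix \ref{AppC}.
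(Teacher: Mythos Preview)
Your overall strategy --- Yudovich/Chemin--type Osgood argument, critical two-dimensional Sobolev embedding with $O(\sqrt q)$ constants, tracking the $q$-dependence to produce a logarithmic modulus, and identifying the $\delta^{r/2}$ inhomogeneity from the dissipative structure --- is correct and matches the paper. Your explanation of why (H6) relaxes $r<\tfrac{2}{p-2}$ to $r<\tfrac{p}{p-2}$ via \eqref{extrereg} is also right.

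There is, however, a genuine gap at the coupling step where you pass from the system for $(w^\delta,G^\delta)$ to a closed inequality for $\|G^\delta\|_r$. You propose to test the equations directly against $|w^\delta|^{r-2}w^\delta$ or $|G^\delta|^{r-2}G^\delta$, and you say the viscous term ``gives $-\|\nabla w^\delta\|_2^2\le0$ and can be discarded.'' This is precisely backward: the viscosity is the mechanism, not a nuisance. The paper does \emph{not} do energy testing on the momentum equation. Instead it views
\[
\partial_t X_u - \Delta X_u = \Div\big(S(F^\delta)-S(\overline F)-\delta\Delta F^\delta\big),\qquad X_u(0)=0,
\]
as a forced heat equation and invokes $L^r$ maximal parabolic regularity (Lemma~\ref{lp regularity}) to obtain
\[
\int_0^t\|\nabla X_u\|_r^r \;\lesssim\; \int_0^t\|S(F^\delta)-S(\overline F)\|_r^r \;+\; \delta^r\int_0^t\|\Delta F^\delta\|_r^r.
\]
Combined with the kinematic bound $\|G^\delta(t)\|_r^r\le\int_0^t\!\int|\nabla X_u|\,|G^\delta|^{r-1}$ (from $\partial_tG^\delta=\nabla X_u$) and Young's inequality, this eliminates $w^\delta$ entirely and yields a closed integral inequality in $\|G^\delta\|_r^r$ with source $C_1\delta^{r/2}$. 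From there the H\"older/Gagliardo--Nirenberg bookkeeping you describe proceeds exactly as you outline, ending in $y(t)\le C_2\,q\int_0^t y(s)^{1-1/q}\,ds + C_1\delta^{r/2}$, after which one optimizes $q=2-\log z(t)$ on a majorant $z$ and applies Osgood.

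A consequence is that the regularity worry you flag at the end --- legitimacy of $|G^\delta|^{r-2}G^\delta$ as a test function for $r<2$ --- simply does not arise in the paper's route, since no such testing is performed; maximal regularity replaces it. If you insist on a pure energy/testing approach without maximal regularity, you would need a separate argument to convert $\nabla w^\delta$ into $S(F^\delta)-S(\overline F)$ in $L^r$, and it is not clear that can be done by discarding the dissipation.
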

\begin{proof} We provide  the proof in two different cases. In the first case we impose the condition (H3) of semiconvexity and obtain a bound valid for $ 2 \leq p \leq 4$, and in the second case we impose the strengthened condition (H6) to obtain a result valid for all $p \geq 2$.
\\
\\
\textbf{(1)} Imposing the condition (H3).
\\\\
Let 
\begin{equation*}
   X_u  = u^{\delta} - \overline{u}, \hspace{0.4cm}
     Y_F = F^{\delta} -\overline{F}.
\end{equation*}\\
Then $(X_u, Y_F)$ solves
\begin{equation}
    \begin{cases}
    \partial_t X_u = \Div \big(S(F^{\delta})-S(\overline{F})\big) + \Delta X_u - \delta \nabla (\Delta F^{\delta})\\
    \partial_t Y_F = \nabla X_u\\
    \text{curl}\hspace{0.1cm} Y_F =0
    \end{cases}
    \label{eqn:sve}\\
\end{equation}
with initial data
$$X_u|_{t=0} = 0, \hspace{0.3cm}Y_F|_{t=0} = 0. $$ 
\\
From $\eqref{eqn:sve}_2$ we get that for $1 \leq r < \infty$, $t \in (0,T)$
\begin{equation}
    \int_{\mathbb{T}^2} |F^{\delta}-\overline{F}|^r dx \leq \int_0^t \int_{\mathbb{T}^2} |\nabla u^{\delta} - \nabla \overline{u}|\hspace{0.2cm} |F^{\delta}-\overline{F}|^{r-1}dx ds.
    \label{eqn:star}\\
\end{equation}
Now consider $\eqref{eqn:sve}_1$ 
$$\partial_t X_u -\Delta X_u = \Div \Big(S(F^{\delta})-S(\overline{F})-\delta \Delta F^{\delta}\Big).$$
\\
Applying Lemma \eqref{lp regularity}, we get that for $1<r<\infty$, $t\in (0,T)$
\begin{align*}
    \int_{0}^t \|\nabla X_u\|^r_r = \int_{0}^t \|\nabla u^{\delta} - \nabla \overline{u}\|^r_r
    &\leq C(r,d)\int_0^t \|S(F^{\delta})-S(\overline{F})-\delta \Delta F^{\delta}\|^r_r \\
    &\lsim \int_0^t \|S(F^{\delta})-S(\overline{F})\|_r^r + \delta^r \int_0^t \|\Delta F^{\delta}\|_r^r ,
\end{align*}
so that we have 
\begin{equation*}
 \frac{1}{2} \int_0^t \|\nabla u^{\delta} - \nabla \overline{u}\|_r^r \leq \frac{1}{2}\int_0^t \|S(F^{\delta})-S(\overline{F})\|_r^r +\frac{\delta^r}{2}\int_0^t \|\Delta F^{\delta}\|_r^r = \frac{1}{2}\int_0^t \|S(F^{\delta})-S(\overline{F})\|_r^r + I_\delta.   
\end{equation*}
\\For the term $I_\delta$ we first recall that the dissipative structure estimate \eqref{eqn:transfer} gives 
\begin{equation*}
  \frac{\delta}{2} \int_0^t  \int_{\mathbb{T}^2} |\Delta F^{\delta}|^2 dx \leq \frac{K}{2}\int_0^t\int_{\mathbb{T}^2}|\nabla F^{\delta}|^2 \leq C_1,
\end{equation*}
where the constant $C_1 >0$ depends on the initial data. Applying H\"{o}lder's inequality to the term $I_\delta$ we obtain for $1 < r < 2$
\begin{equation*}
    I_\delta = \frac{\delta ^r}{2} \int_0^t \int |\Delta F^{\delta}|^r dx ds 
    \leq \big(\frac{1}{2}\big)^{1-\frac{2}{r}} C_1 \delta^{\frac{r}{2}}, \hspace{0.3cm} 1 < r < 2.
\end{equation*}
Substituting back we get 
\begin{equation}
    \int_0^t \|\nabla u^{\delta} - \nabla \overline{u}\|_r^r \leq \int_0^t \|S(F^{\delta})-S(\overline{F})\|_r^r + \big(\frac{1}{2}\big)^{1-\frac{2}{r}} C_1\delta^{\frac{r}{2}}.
    \label{eqn:dblstar}
\end{equation}
Now consider \eqref{eqn:star} and apply Young's inequality
\begin{equation*}
    \int |F^{\delta}-\overline{F}| dx \lsim \int_0^t \int |\nabla u^{\delta}- \nabla \overline{u}|^r + \int_0^t \int|F^{\delta}-\overline{F}|^r dxds.
\end{equation*}
Applying \eqref{eqn:dblstar}
$$\int |F^{\delta}-\overline{F}|^r dx \lsim \int_0^t \|S(F^{\delta})-S(\overline{F})\|_r^r + \int_0^t \|F^{\delta}-\overline{F}\|_r^r dxds + \big(\frac{1}{2}\big)^{1-\frac{2}{r}} C_1 \delta^{\frac{r}{2}}.$$
From the assumptions (H4) and (H8) we have 
\begin{equation}
    |S(F^{\delta})-S(\overline{F})| \leq \Bigg(1 + |F^{\delta}|^{p-2}+ |\overline{F}|^{p-2}\Bigg)|F^{\delta}-\overline{F}|.
    \label{eqn:tripstar}
\end{equation}
Using \eqref{eqn:tripstar} we get 
\begin{multline*}
    \int |F^{\delta}-\overline{F}|^r dx \lsim \int_0^t \int \Bigg((1+|F^{\delta}|^{p-2}+|\overline{F}|^{p-2})|F^{\delta}-\overline{F}|\Bigg)^r dxds +  \int_0^t \int |F^{\delta}-\overline{F}|^r dxds + \big(\frac{1}{2}\big)^{1-\frac{2}{r}} C_1 \delta^{\frac{r}{2}}\\ \\
    \lsim \int_0^t \int |F^{\delta}-\overline{F}|^r dx ds + \int_0^t \int |F^{\delta}|^{r(p-2)}|F^{\delta}-\overline{F}|^r dxds \\\\+\int_0^t \int |\overline{F}|^{r(p-2)}|F^{\delta}-\overline{F}|^r dxds + \big(\frac{1}{2}\big)^{1-\frac{2}{r}}C_1\delta^{\frac{r}{2}}  = I_1 + I_2 + I_3 + C_1 \delta^{\frac{r}{2}}.
\end{multline*}
Consider the integral $I_2$. Applying H\"{o}lder's inequality and the Gagliardo-Nirenberg-Sobolev inequalities in Lemma \eqref{gag} 
\begin{align*}
    I_2 &= \int_0^t \int |F^{\delta}|^{r(p-2)} |F^{\delta}-\overline{F}|^r dx ds\\
    &= \int_0^t \int |F^{\delta}|^{r(p-2)} |F^{\delta}-\overline{F}| |F^{\delta}-\overline{F}|^{r-1}dxds\\ 
    &\leq \int_0^t \|F^{\delta}\|_{r^2 q(p-2)}^{r(p-2)} \|F^{\delta}-\overline{F}\|_{\frac{rq}{q-1}} \|F^{\delta}-\overline{F}\|_r^{r-1}\\ 
    &\leq C_2 \frac{q}{q-1} \int_0^t \|F^{\delta}\|^{r(p-2)}_{r^2 q(p-2)} \|F^{\delta}-\overline{F}\|_r^{1-\frac{1}{q}}\|\nabla F^{\delta} - \nabla \overline{F}\|^{\frac{1}{q}} \|F^{\delta}-\overline{F}\|_r^{r-1}\\ 
    &\lsim  \frac{q}{q-1} \int_0^t \|F^{\delta}\|^{r(p-2)}_{r^2 q(p-2)} \|F^{\delta}-\overline{F}\|_r^{r-\frac{1}{q}}\|\nabla F^{\delta} - \nabla \overline{F}\|^{\frac{1}{q}} \\ 
    &\lsim \underbrace{\frac{q}{q-1}}_{\leq 2} q^{\frac{r(p-2)}{2}}\int_0^t \|\nabla F^{\delta}\|^{r(p-2)} \|F^{\delta}-\overline{F}\|^{r-\frac{1}{q}}_r \|\nabla F^{\delta}- \nabla \overline{F}\|^{\frac{1}{q}}\\
    &\lsim \hspace{0.1cm} q \int_0^t \|\nabla F^{\delta}\|_2 ^{r(p-2)} \|F^{\delta}-\overline{F}\|^{r-\frac{1}{q}}_r \|\nabla F^{\delta} - \nabla \overline{F}\|_2^{\frac{1}{q}} \, ,
\end{align*}
 where the constant in the last inequality is $C_2 = C_2 (r,p)$ depending only on $r, p$. Note that the above calculations restrict $ 2\leq p < 4$ and we may choose $1 <r < \min \{2, \frac{2}{p-2}\}$.\\
Since $F^{\delta}, \overline{F} \in L^{\infty}\big(0,T; H^1\big)$ we get
\begin{equation*}
    I_2 \lsim q \int_0^t \|F^{\delta}- \overline{F } \|_r^{r-\frac{1}{q}}. 
    \label{eqn:one}
\end{equation*}
Similarly for $I_3$ 
\begin{equation*}
    I_3 = \int_0^t \int |\overline{F}|^{r(p-2)}|F^{\delta}-\overline{F}|^r dxds \lsim q \int_0^t \|F^{\delta}-\overline{F}\|_r^{r-\frac{1}{q}}.
\end{equation*}
Therefore
\begin{align*}
    \int |F^{\delta}-\overline{F}|^r &\lsim \int_0^t \int |F^{\delta}-\overline{F}|^r dxds + \int_0^t \int |F^{\delta}|^{r(p-2)}|F^{\delta}-\overline{F}|^r dxds \\
    &\qquad 
    +   \int_0^t \int   |\overline{F}|^{r(p-2)}|F^{\delta}-\overline{F}|^r dxds   +C_1\delta^{\frac{r}{2}} \\
    &\lsim \hspace{0.4cm}\int_0^t \int |F^{\delta}-\overline{F}|^r dxds + 2q \int_0^t \int |F^{\delta}-\overline{F}|_r^{r-\frac{1}{q}} + C_1\delta^{\frac{r}{2}} \\ 
    &\lsim \hspace{0.4cm} \Big(\sup_t \|F^{\delta}-\overline{F}\|_r^{\frac{1}{q}} + q\Big)\int_0^t \|F^{\delta}-\overline{F}\|_r^{r-\frac{1}{q}}ds + C_1\delta^{\frac{r}{2}}. 
\end{align*}
As $F^{\delta}, \overline{F} \in L^{\infty}\big(0,T; H^1\big)$, we conclude 
$$\int |F^{\delta}-\overline{F}|^r dx \leq C_2 \hspace{0.1cm} q \int_0^t \|F^{\delta}-\overline{F}\|_r^{r-\frac{1}{q}} + C_1\delta^{\frac{r}{2}}. \\ \\ $$
Set $y(t) = \|F^{\delta}-\overline{F}\|_r^r$, note that $y(t) \in C([0,T])$ and study the differential inequality
\begin{equation}
        y(t)\leq C_2\hspace{0.1cm} q \int_0^t y(s)^{1-\frac{1}{q}} ds + C_1\delta^{\frac{r}{2}}, \hspace{0.3cm}
        y(0)=0,
    \label{ineq:1}
\end{equation}
for $q > 2$. Denote $\eta := C_1\delta^{\frac{r}{2}}$ and let 
\begin{equation}
z(t) = C_2 \hspace{0.1cm}q \int_0^t y(s)^{1-\frac{1}{q}} ds + \eta, \hspace{0.3cm}
z(0)= \eta,
\end{equation}
then $y(t) \leq z(t)$, and so 
\begin{equation*}
    \frac{dz}{dt} = C_2 \hspace{0.1cm}q y(t)^{1-\frac{1}{q}}\\\\
    \leq C_2 \hspace{0.1cm}q z^{1-\frac{1}{q}} \, .
\end{equation*}
We allow $q$ to depend on time and optimize it to be
$$q = 2-\log z(t),$$
so that as long as $z(t) < 1$ we have 
\begin{equation*}
\frac{dz}{dt} \leq C_2(2-\log z)z^{1-\frac{1}{2-\log z}}\\
= C_2(2-\log z)z e^{\frac{-\log z}{2-\log z}}\\
\leq C_2(2-\log z) z \, .
\end{equation*}
Integrating in time for $t\leq T$, we finally arrive at
\begin{equation*}
    z(t) \leq C_2 \int_0^\tau (2-\log z(\tau))z(\tau)d\tau + \eta, 
\end{equation*}
we next apply Osgood's lemma \eqref{osgood} with
\begin{equation*}
    \mathcal{M}(x) =  \log(2-\log x)-\log 2.
\end{equation*}
 to obtain  
\begin{equation*}
    y(t) \leq z(t) \leq \Big(C_1 \delta^{\frac{r}{2}}\Big)^{\exp(-C_2 t)} \exp\Big\{2-2\exp(-C_2 t)\Big\}.
\end{equation*}
\\\\
\textbf{(2)} Imposing the condition (H6).
\\\\
\noindent
Doing the same analysis as before we arrive at
\begin{multline*}
    \int |F^{\delta}-\overline{F}|^r dx
    \lsim \int_0^t \int |F^{\delta}-\overline{F}|^r dx ds + \int_0^t \int |F^{\delta}|^{r(p-2)}|F^{\delta}-\overline{F}|^r dxds \\+ \int_0^t \int |\overline{F}|^{r(p-2)}|F^{\delta}-\overline{F}|^r dxds + \big(\frac{1}{2}\big)^{1-\frac{2}{r}}C_1\delta^{\frac{r}{2}}
    = I_1 + I_2 + I_3 + C_1 \delta^{\frac{r}{2}}.
\end{multline*}
Setting $\Gamma _1 = |F^{\delta}|^{\frac{p}{2}}$ and $\Gamma _2 = |\overline{F}|^{\frac{p}{2}}$, we see that $I_2$ becomes
\begin{equation*}
    I_2 = \int_0^t\int \Gamma_1^{\frac{2r(p-2)}{p}}|F^{\delta}-\overline{F}||F^{\delta}-\overline{F}|^{r-1}.
\end{equation*}
Following the analysis as before, we again apply H\"{o}lder's inequality and the Gagliardo-Nirenberg-Sobolev inequalities in Lemma \eqref{gag} to get
\begin{align*}
    I_2 &\leq \int_0^t \|\Gamma_1\|^{2r(p-2)/p}_{2r^2q(p-2)/p}\|F^\delta -\overline{F}\|_{\frac{rq}{q-1}}\|F^\delta - \overline{F}\|_r^{r-1}\\
    &\leq C_2 \frac{q}{q-1}\int_0^t \|\Gamma_1\|^{2r(p-2)/p}_{2r^2q(p-2)/p} \|F^\delta-\overline{F}\|^{r-\frac{1}{q}}_{r}\|\nabla F^\delta-\nabla \overline{F}\|_2^{\frac{1}{q}}\\
    &\leq  C_2\hspace{0.1cm}q^{\frac{r(p-2)}{p}}\int_0^t \|\nabla \Gamma_1\|_2^{2r(p-2)/p}\||F^{\delta}-\overline{F}\|_r^{r-\frac{1}{q}}\|\nabla F^{\delta}- \nabla \overline{F}\|_2^{\frac{1}{q}}\\
    &\lsim q \int_0^t \|\nabla \Gamma_1\|_2^{2r(p-2)/p}\||F^{\delta}-\overline{F}\|_r^{r-\frac{1}{q}}\|\nabla F^{\delta}- \nabla \overline{F}\|_2^{\frac{1}{q}}
\end{align*}
Fixing $r$ such that $1 < r < \min\{2, \frac{p}{p-2}\}$ we obtain 
\begin{equation}
    I_2 \leq C_2 \hspace{0.1cm}q \int_0^t \|\nabla \Gamma_1\|_2^2 \hspace{0.1cm}\|F^{\delta}-\overline{F}\|_r^{r-\frac{1}{q}}ds.
\end{equation}
Continuing in a similar manner for $I_3$, we eventually arrive at the differential inequality 
\begin{equation*}
    y(t)\leq C_2 \hspace{0.1cm} q \int_0^t a(s) y(s)^{1-\frac{1}{q}} ds + C_1\delta^{\frac{r}{2}} \, ,
\end{equation*}
where $a(s) = 1 + \|\nabla \Gamma_1(s)\|_2^2 + \|\nabla \Gamma_2(s)\|_2^2$. As (H6) is imposed, the existence theorem \eqref{thm1} ensures $\nabla |F^\delta|^{\frac{p}{2}}\in L^2(0,T; L^2(\mathbb{T}^2))$. A similar result for $\nabla |\overline{F}|^{\frac{p}{2}}$ can be found in \cite{koumatos2020existence}. Thus $a(s)\in L^1(0,T)$.\\\\
Proceeding in the analysis as before, letting 
\begin{equation*}
    z(t) = C_2 q \int_0^t a(s)y(s)^{1-\frac{1}{q}} ds + C_1 \delta^{\frac{r}{2}},
\end{equation*}
and choosing $q= 2-\log z(t)$, we arrive at 
\begin{equation*}
    z(t) \leq C_2 \hspace{0.1cm} q \int_0^\tau a(\tau)(2-\log z(\tau))z(\tau)d\tau +  C_1 \delta^{\frac{r}{2}}, 
\end{equation*}
Applying Osgood's lemma \eqref{osgood} as before we obtain the bound
\begin{equation*}
    y(t) \leq z(t) \leq \Big(C_1 \delta^{\frac{r}{2}}\Big)^{\exp(-C_2 t)} \exp\Big\{2-2\exp(-C_2 t)\Big\}.
\end{equation*}
\end{proof}

\section{Limit as $\delta \rightarrow 0$; a compactness argument}
\label{sec4}
The next theorem shows that under some additional assumptions on $\nabla F_0$, solutions of \eqref{eqn:Visco} converge, up to a subsequence, to solutions of \eqref{eqn:elast} as $\delta \rightarrow 0$. This also provides an existence result for \eqref{eqn:Visco} when $F_0 \in L^p(\mathbb{T}^d)$. Note that all the bounds on the initial data are uniform.  The notation $(u^{\nu,\delta}, F^{\nu,\delta})\equiv (u^\delta, F^{\delta})$ will again be used here.
\begin{theorem}\label{thm4.1}
Let $p \geq 2$ and assume that $W$ satisfies (H1)-(H3). Suppose that $\delta > 0$ and, as a sequence in $\delta$,
\begin{equation}
    u^{\delta}_0 \in_b L^2(\mathbb{T}^d),
\end{equation}
\begin{equation}
    F^{\delta}_0 \in_b L^p(\mathbb{T}^d),
\end{equation}
\begin{equation}
    \delta^{\frac{1}{2}-\varepsilon}\nabla F^{\delta}_0 \in_b L^2(\mathbb{T}^d) \, ,
    \label{gradf-bound}
\end{equation}

while
\begin{equation}
    (\overline{u}_0, \overline{F}_0) \in L^2(\mathbb{T}^d)\times L^p(\mathbb{T}^d).
\end{equation}
Moreover, assume that 
\begin{equation}
    u^\delta_0 \rightarrow \overline{u}_0 \hspace{0.2cm}\text{in}\hspace{0.2cm} L^2(\mathbb{T}^d), \hspace{0.3cm}  F^\delta_0 \rightarrow \overline{F}_0 \hspace{0.2cm}\text{in}\hspace{0.2cm} L^p(\mathbb{T}^d).
    \label{ID}
\end{equation}
Let $\{(u^{\delta}, F^{\delta})\}_{\delta}$ be a sequence of weak solutions of \eqref{eqn:Visco}. Then, up to a subsequence, there exists $(\overline{u}, \overline{F})$, solution of \eqref{eqn:elast}, such that 

\begin{equation*}
u^{\delta}\rightarrow \overline{u}\hspace{0.2cm}\text{in}\hspace{0.2cm}  C \big( (0,T) ;  L^2(\mathbb{T}^d)\big),
\end{equation*}
\begin{equation*}
F^{\delta}\rightarrow \overline{F}\hspace{0.2cm}\text{in}\hspace{0.2cm} C ( (0,T) ;  L^2(\mathbb{T}^d)).
\end{equation*}

\end{theorem}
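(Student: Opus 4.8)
This is a compactness argument; the uniform bounds come from the energy inequality \eqref{eqn:energy} (note that the dissipative-structure estimate \eqref{eqn:transfer} is not uniformly available here, since \eqref{gradf-bound} allows $\|\nabla F_0^\delta\|_2$, hence $\|\Div F_0^\delta\|_2$, to blow up). First I would extract the uniform bounds. Evaluating \eqref{eqn:energy} and using \eqref{ID} together with the observation that $\tfrac{\delta}{2}\|\nabla F_0^\delta\|_2^2 = \tfrac12\,\delta^{2\varepsilon}\,\|\delta^{1/2-\varepsilon}\nabla F_0^\delta\|_2^2\to 0$ by \eqref{gradf-bound}, the initial energies are uniformly bounded and convergent, and one obtains $u^\delta\in_b L^\infty(0,T;L^2(\mathbb{T}^d))\cap L^2(0,T;H^1(\mathbb{T}^d))$ (the second uses that $\nu$ is fixed), $F^\delta\in_b L^\infty(0,T;L^p(\mathbb{T}^d))$ (via the lower bound in (H2)), and $\sqrt{\delta}\,\nabla F^\delta\in_b L^\infty(0,T;L^2(\mathbb{T}^d))$. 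By the growth bound \eqref{growthderiv}, $S(F^\delta)\in_b L^\infty(0,T;L^{p'}(\mathbb{T}^d))$ with $p'=p/(p-1)$, and the dispersive term is asymptotically negligible: $\delta\nabla\Delta F^\delta = \sqrt{\delta}\,\nabla\Delta(\sqrt{\delta}\,F^\delta)\to 0$ in $L^\infty(0,T;H^{-2}(\mathbb{T}^d))$ since $\sqrt{\delta}\,F^\delta\in_b L^\infty(0,T;H^1(\mathbb{T}^d))$.

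Next I would set up the compactness. From $\eqref{eqn:Visco}_2$ we have $\partial_t F^\delta = \nabla u^\delta\in_b L^2(0,T;L^2(\mathbb{T}^d))$, and from $\eqref{eqn:Visco}_1$ together with the bounds above, $\partial_t u^\delta\in_b L^2(0,T;H^{-m}(\mathbb{T}^d))$ for $m$ large enough ($m=3$ works for $d\le 3$). The Aubin--Lions--Simon lemma \cite{simon1986compact} then provides, along a subsequence, $u^\delta\to\overline u$ strongly in $L^2(0,T;L^2(\mathbb{T}^d))$ and $F^\delta\to\overline F$ strongly in $C([0,T];H^{-1}(\mathbb{T}^d))$, together with the weak limits $u^\delta\rightharpoonup\overline u$ in $L^2(0,T;H^1)$ and weakly-$*$ in $L^\infty(0,T;L^2)$, and $F^\delta\rightharpoonup\overline F$ weakly-$*$ in $L^\infty(0,T;L^p)$. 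The essential structural point is that $\nabla F^\delta$ carries no $\delta$-uniform bound, so the spatial compactness of $F^\delta$ cannot be self-generated: it is imported from that of $u^\delta$ through the kinematic identity $\partial_t F^\delta = \nabla u^\delta$ (write $F^\delta(t)=F_0^\delta+\int_0^t\nabla u^\delta\,ds$ and use $F_0^\delta\to\overline F_0$ in $L^p$), which is exactly why $H^{-1}$ rather than $L^2$ is the natural compactness space. Since $p\ge 2$, interpolating the strong $L^2(0,T;H^{-1})$ convergence against the uniform $L^\infty(0,T;L^p)$ bound yields $F^\delta\to\overline F$ strongly in $L^2((0,T)\times\mathbb{T}^d)$ (for $p=2$ this step instead uses convergence of the energies), and after passing to a further subsequence $F^\delta\to\overline F$ a.e. on $(0,T)\times\mathbb{T}^d$. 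As $S$ is continuous and $S(F^\delta)\in_b L^\infty(0,T;L^{p'})$ with $p'>1$, Vitali's theorem gives $S(F^\delta)\to S(\overline F)$ in $L^q((0,T)\times\mathbb{T}^d)$ for every $1\le q<p'$.

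With these convergences I would pass to the limit in the weak formulation of \eqref{eqn:Visco}: the viscous term passes by weak $L^2$ convergence of $\nabla u^\delta$, the elastic term by the strong convergence of $S(F^\delta)$, the dispersive term $\delta\iint\Delta F^\delta\,\nabla\phi\,dx\,dt$ vanishes (integrate by parts twice and use $F^\delta\in_b L^1((0,T)\times\mathbb{T}^d)$), and $\partial_t\overline F=\nabla\overline u$ with $\overline F|_{t=0}=\overline F_0$ and $\curl\overline F=0$ (hence $\overline F=\nabla\overline y$) follow from $\eqref{eqn:Visco}_{2,3}$. Weak lower semicontinuity of the energy functional, together with convergence of the initial energies, yields the energy inequality and the regularity claimed for $(\overline u,\overline F)$, so $(\overline u,\overline F)$ is a weak solution of \eqref{eqn:elast}. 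Finally the convergences are upgraded to $C([0,T];L^2(\mathbb{T}^d))$: for $F^\delta$, combining the a.e.-in-$t$ strong $L^2$ convergence from the previous step with the uniform equicontinuity of $\{F^\delta\}$ in $C([0,T];L^2)$ (a consequence of $\partial_t F^\delta\in_b L^2(0,T;L^2)$) forces uniform-in-$t$ convergence; for $u^\delta$, one combines $u^\delta\in C([0,T];L^2\text{-weak})$ with convergence of $t\mapsto\|u^\delta(t)\|_2$ obtained from the energy identity of the limit system and weak lower semicontinuity of the remaining terms.

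The main obstacle is the treatment of the nonlinear stress $S(F^\delta)$ in the complete absence of a $\delta$-uniform bound on $\nabla F^\delta$: compactness of $F^\delta$ must be transferred from $u^\delta$ via $\partial_t F^\delta=\nabla u^\delta$, which only delivers strong convergence in a negative Sobolev space and forces the $L^2$-strong (and then a.e.) convergence to be recovered a posteriori by interpolation with the $L^\infty(0,T;L^p)$ bound. A secondary but genuine technicality — and the reason for the exact exponent in \eqref{gradf-bound} — is ensuring that both the initial term $\tfrac{\delta}{2}\|\nabla F_0^\delta\|_2^2$ and the running dispersive term disappear in the limit while the remainder of the initial energy stays uniformly bounded.
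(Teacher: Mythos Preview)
Your argument has a genuine gap at the step where you claim that ``interpolating the strong $L^2(0,T;H^{-1})$ convergence against the uniform $L^\infty(0,T;L^p)$ bound yields $F^\delta\to\overline F$ strongly in $L^2((0,T)\times\mathbb{T}^d)$.'' This interpolation is false: take $F_n(x)=e^{in\cdot x}$ on $\mathbb{T}^d$; then $\|F_n\|_{L^p}=1$ for every $p$, $\|F_n\|_{H^{-1}}\to 0$, yet $\|F_n\|_{L^2}=1$. The point is that $L^p$ carries no positive Sobolev regularity, so there is no interpolation pair $[H^{-1},L^p]_\theta$ that recovers $L^2$. Consequently you cannot obtain a.e.\ convergence of $F^\delta$, the Vitali argument for $S(F^\delta)\to S(\overline F)$ collapses, and the identification of the weak limit of $S(F^\delta)$ --- the heart of the problem --- is left unproved. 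Your diagnosis that ``spatial compactness of $F^\delta$ is imported from $u^\delta$ through $\partial_t F^\delta=\nabla u^\delta$'' is correct, but this only transfers the \emph{weak} $L^2$ convergence of $\nabla u^\delta$, not strong convergence.

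The paper takes a different route that avoids any need for a.e.\ convergence of $F^\delta$: it passes to the limit with an unidentified weak limit $g=\operatorname{w-lim}S(F^\delta)$, and then runs a Minty--Browder/relative-energy argument by testing the difference of the two systems against $y^\delta-\bar y$. The semiconvexity (H3) enters here directly (not merely through existence): writing $\tilde S=S+K\,\mathrm{Id}$ monotone, one obtains
\[
\tfrac12\|F^\delta(t)-\overline F(t)\|_2^2 \;\le\; K\!\int_0^t\|F^\delta-\overline F\|_2^2 \;+\; \text{(terms that vanish as }\delta\to 0),
\]
and Gr\"onwall closes. Because the test function is not smooth, the dispersive error $J_3=\delta\!\int_0^t\!\int \Delta F^\delta\,(F^\delta-\overline F)$ cannot be disposed of by integration by parts; it is here that the dissipative structure \eqref{eqn:transfer} (which you dismissed as unavailable) is used. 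Under the curl-free constraint $\|\Div F\|_2=\|\nabla F\|_2$, so the term $\tfrac{\nu^2}{8}\|\Div F^\delta\|_2^2$ on the left of \eqref{eqn:transfer} allows a Gr\"onwall bound $\|\nabla F^\delta(t)\|_2^2\lesssim \|\nabla F_0^\delta\|_2^2\lesssim \delta^{-(1-2\varepsilon)}$, whence $\delta^{2}\!\int_0^t\|\Delta F^\delta\|_2^2\lesssim \delta^{2\varepsilon}$ and $J_3\lesssim \delta^{\varepsilon}\to 0$. This is the actual role of the exponent in \eqref{gradf-bound}.
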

\noindent 
\begin{proof}
From the energy estimate \eqref{eqn:energy} we get 
\begin{equation}
    u^{\delta} \in_b \hspace{0.1cm} L^{\infty}\big(0,T; L^2(\mathbb{T}^d)\big)\cap L^2\big(0,T; H^1(\mathbb{T}^d)\big),
\end{equation}
\begin{equation}
    F^{\delta} \in_b \hspace{0.1cm} L^{\infty}\big(0,T; L^p(\mathbb{T}^d)\big).
    \label{f}
\end{equation}
Moreover, from \eqref{eqn:Visco} we have that 
\begin{equation}
    \partial_t u^\delta \in_b \hspace{0.1cm}L^2\big(0,T; H^{-1}(\mathbb{T}^d)\big),
\end{equation}
and so an application of Aubin-Lions-Simon \cite{simon1986compact} lemma gives us 
\begin{equation}
    u^\delta \rightarrow \overline{u} \hspace{0.2cm} \text{in} \hspace{0.2cm} L^2\big(0,T; L^2(\mathbb{T}^d)\big).
    \label{u}
\end{equation}
Given \eqref{f}, we have from Banach-Alaoglu theorem
\begin{equation}
    F^{\delta} \rightharpoonup \overline{F} \hspace{0.2cm} \text{weak-$\ast$} \hspace{0.2cm} \text{in}\hspace{0.2cm} L^{\infty}\big(0,T;L^p(\mathbb{T}^d)\big).
\end{equation}
For the nonlinear term $S(F^{\delta})$, the growth conditions imply
\begin{equation}
    S(F^{\delta}) \in L^{\infty}\big(0,T;L^{\frac{p}{p-1}}(\mathbb{T}^d)\big),
\end{equation}
and so there exists some $g$ in $L^{\infty}\big(0,T;L^{\frac{p}{p-1}}(\mathbb{T}^d)\big)$ such that 
\begin{equation}
    S(F^{\delta})\rightharpoonup g\hspace{0.2cm} \text{weak-$\ast$}  \hspace{0.2cm} \text{in}\hspace{0.2cm} L^{\infty}\big(0,T;L^{\frac{p}{p-1}}(\mathbb{T}^d)\big).
\end{equation}
Next we show that $g = S(\overline{F})$. Taking the weak limit as $\delta \rightarrow 0$ of \eqref{eqn:Visco} we get 

\begin{equation*}
\tag{W}
\begin{cases}
\partial_t \overline{u} = \Div g + \nabla \overline{u}\\
\partial_t \overline{F} = \nabla \overline{u}\\\
\curl \overline{F} = 0 
\end{cases}
\label{eqn:weak}   
\end{equation*}
and now we compare weak solutions of \eqref{eqn:Visco} to weak solutions of \eqref{eqn:weak}. Let $\phi \in C^{\infty}\big(0,T; C^{\infty}(\mathbb{T}^d)\big)$. 
A weak solution for \eqref{eqn:Visco} satisfies
\begin{multline}
\iint u^{\delta} \phi_t dxds - \iint S(F^\delta)\nabla \phi dxds -\iint\nabla u^\delta \nabla \phi dxds \\+\delta \iint \Delta F^{\delta} \nabla \phi dxds + \int u^{\delta}(t)\phi(t,x)-u^{\delta}(0)\phi(0,x) dx = 0,
    \label{wkvisco}
\end{multline}
and a weak solution of \eqref{eqn:weak} satisfies 
\begin{multline}
\iint \overline{u}\phi_t dxds - \iint g \nabla \phi dxds - \iint \nabla \overline{u}\nabla \phi dxds + \int \overline{u}(t)\phi(t,x) - \overline{u}(0)\phi(0,x)dx= 0.
   \label{wkwk}
\end{multline}
Taking the difference of the weak solutions of \eqref{eqn:Visco} and \eqref{eqn:weak}
\begin{multline*}
    \iint (u^{\delta} - \overline{u}) \phi_t dxds - \iint\big(S(F^\delta)-g\big)\nabla \phi dxds - \iint (\nabla u^{\delta}-\nabla \overline{u})\nabla \phi dxds \\+ \delta \iint \Delta F^{\delta} \nabla \phi dxds + \int \big(u^{\delta}(t)-\overline{u}(t)\big)\phi(t,x) - \big(u^{\delta}(0)-\overline{u}(0)\big)\phi(0,x) dx = 0.
\end{multline*}
A density argument allows us to choose $\phi = y^\delta - \overline{y}$ and to obtain
\begin{multline*}
    \frac{1}{2}\int|F^{\delta}-\overline{F}|^2 dx + \int_0^t \int \big(S(F^\delta)-g\big)(F^{\delta}-\overline{F})dxds 
    \\
    = \int_0^t\int |u^\delta-\overline{u}|^2 dxds + \delta \int_0^t\int \Delta F^{\delta} (F^{\delta}-\overline{F})dxds
    \\
    + \int(u^{\delta}(t)-\overline{u}(t))(y^\delta-\overline{y})dx - \int(u^{\delta}_0-\overline{u}_0)(y^{\delta}(0)-\overline{y}(0)) + \frac{1}{2}\int |F^{\delta}_0-\overline{F}_0|^2 dx,
\end{multline*}
adding and subtracting $S(\overline{F})$ gives 
\begin{multline*}
    \frac{1}{2}\int |F^{\delta}-\overline{F}|^2 dx + \int_0^t \int \big(S(F^{\delta})-S(\overline{F})\big)(F^\delta - \overline{F}) dxds \leq \int_0^t \int (g-S(\overline{F}))(F^\delta-\overline{F}) dxds \\+ \int_0^t \int |u^\delta -\overline{u}|^2 dxds + \delta \int_0^t\int \Delta F^{\delta}(F^\delta-\overline{F})dxds\\ + \int \big(u^{\delta}(t)-\overline{u}(t)\big)(y^{\delta}-\overline{y})dx -\int \big(u^{\delta}_0-\overline{u}_0\big)(y^{\delta}(0)-\overline{y}(0)) dx + \frac{1}{2}\int |F_0^\delta-\overline{F}_0|^2.
\end{multline*}
The semiconvexity assumption (H3) implies that 
\begin{equation*}
    \Tilde{S}(F) = S(F) + KF
\end{equation*}
is a monotone map. And so we can write 
\begin{multline*}
    \frac{1}{2}\int |F^{\delta}-\overline{F}|^2 dx + \int_0^t \int \big(\overbrace{\Tilde{S}(F^{\delta})-\Tilde{S}(\overline{F})\big)(F^\delta - \overline{F})}^{\geq 0} dxds \leq \int_0^t \int (g-S(\overline{F}))(F^\delta-\overline{F}) dxds \\+ \int_0^t \int |u^\delta -\overline{u}|^2 dxds + \delta \int_0^t\int \Delta F^{\delta}(F^\delta-\overline{F})dxds + \int \big(u^{\delta}(t)-\overline{u}(t)\big)(y^{\delta}-\overline{y})dx  \\-\int \big(u^{\delta}_0-\overline{u}_0\big)(y^{\delta}(0)-\overline{y}(0)) dx + \frac{1}{2}\int |F_0^\delta-\overline{F}_0|^2 + K \int_0^t\int |F^{\delta}-\overline{F}|^2 dxds. 
\end{multline*}
Then 
\begin{multline}
    \int |F^{\delta}-\overline{F}|^2 dx \lsim \int_0^t \int (g-S(\overline{F}))(F^\delta-\overline{F}) dxds + \int_0^t \int |u^\delta -\overline{u}|^2 dxds  \\+ \delta \int_0^t\int \Delta F^{\delta}(F^\delta-\overline{F})dxds + \int \big(u^{\delta}(t)-\overline{u}(t)\big)(y^{\delta}-\overline{y})dx  -\int \big(u^{\delta}_0-\overline{u}_0\big)(y^{\delta}(0)-\overline{y}(0)) dx \\+ \int |F_0^\delta-\overline{F}_0|^2 +  \int_0^t\int |F^{\delta}-\overline{F}|^2 dxds = J_1+J_2+J_3+J_4+J_5+J_6+J_7.
    \label{calc}
\end{multline}
Now consider the term $ J_3$. The dissipative structure estimate \eqref{eqn:transfer} together with \eqref{gradf-bound} imply, after multiplication by $\delta^{\frac{1}{4}}$,
\begin{equation}
    \delta^{5/4} \int |\Delta F^{\delta}|^2 dx \lsim \delta^{1/4}\int |\nabla F^{\delta}|^2 dx \leq C,
\end{equation}
where the constant $C$ depends on the initial data. Then 
\begin{align*}
  J_3 &= \delta \int_0^t \int \Delta F^{\delta}(F^{\delta}-\overline{F}) dxds \\
  &\leq \int_0^t \Big(\delta^{3/4} \delta^{5/4}\int |\Delta F^{\delta}|^2 dx \Big)^{1/2}\|F^{\delta}-\overline{F}\|_{L^2} \hspace{0.1cm}ds
  \lsim \delta^{3/8}\int_0^t \|F^{\delta}-\overline{F}\|_{L^2}\hspace{0.1cm}ds,
\end{align*}
and so $J_3 \rightarrow 0$ as $\delta \rightarrow 0$. \\\\
Using \eqref{ID} and \eqref{u}, taking the limsup and applying Fatou's lemma where necessary, \eqref{calc} simplifies to 
\begin{equation}
    \limsup_{\delta \rightarrow 0} \int |F^{\delta}-\overline{F}|^2 dx \lsim \limsup_{\delta \rightarrow 0 }(J_1+J_7).
\end{equation}
Consider $J_1$ 
\begin{equation*}
    J_1 := \int_0^t \int (g-S(\overline{F}))(F^{\delta}-\overline{F}) dxds,
\end{equation*}
and note that $(g-S(\overline{F})) \in L^{\frac{p}{p-1}}(\mathbb{T}^d)$. That is 
\begin{equation}
    J_1 := \int_0^t \int (g-S(\overline{F}))(F^{\delta}-\overline{F}) dxds = \int_0^t \langle g - S(\overline{F}), F^{\delta}-\overline{F}\rangle_{L^{\frac{p}{p-1}}, L^p} ds \rightarrow 0,
\end{equation}
since $F^{\delta}\rightharpoonup \overline{F}$ weakly-* in $L^p(\mathbb{T}^d)$. So we are left with  
\begin{equation*}
    \limsup_{\delta \rightarrow 0} \int |F^{\delta}-\overline{F}|^2 dx \lsim \int_0^t \limsup_{\delta \rightarrow 0 }\int |F^{\delta}-\overline{F}|^2 dx ds. 
\end{equation*}
Setting $\xi(t) = \limsup_{\delta \rightarrow 0 }\int |F^{\delta}-\overline{F}|^2 dx $, we get 
\begin{equation*}
\xi(t) \lsim \int_0^t \xi(s) ds, \hspace{0.5cm} \xi(0) = 0,  
\end{equation*}
Gr\"{o}nwall's lemma then implies $\xi(t) \equiv 0$ and thus 
\begin{equation}
    F^{\delta} \rightarrow \overline{F} \hspace{0.2cm} \text{in}\hspace{0.2cm} L^{\infty}\big(0,T;L^2(\mathbb{T}^d)\big),
\end{equation}
and so indeed $g = S(\overline{F})$.
\end{proof}

\section{Strain-Gradient Elasticity; Zero viscosity limit}
\label{sec5}
The next theorem investigates the limit as the viscosity coefficient $\nu \rightarrow 0$. For simplicity we fix $\delta = 1$ and denote $(u^{\nu,\delta}, F^{\nu,\delta})\equiv (u^\nu,F^{\nu}) $. It satisfies the system
\begin{equation*}
\tag{VE$_{\nu,\delta}$}   
\begin{cases}
\partial_t u^{\nu} = \Div S(F^{\nu}) + \nu\Delta u^{\nu} - \nabla \Delta F^{\nu}\\
\partial_t F^{\nu} = \nabla u^{\nu}\\
\curl F^{\nu} =0 
\end{cases}
\label{eqn:delta-nu}
\end{equation*}
with initial data at $\{t=0\} \times \mathbb{T}^d$
$$u^{\nu}|_{t=0} = u_0^{\nu},$$ 
$$F^{\nu}|_{t=0} = F_0^{\nu} = \nabla y_0,$$
and periodic boundary conditions.\\
\noindent
Let $(\hat{u},\hat{F})$ be a solution of the system 
\begin{equation*}
\tag{VE$_{\nu }$}   
\begin{cases}
\partial_t \hat{u} = \Div S(\hat{F}) - \nabla \Delta \hat{F}\\
\partial_t \hat{F} = \nabla \hat{u}\\
\curl \hat{F} =0 
\end{cases}
\label{eqn:delta}
\end{equation*}
with initial data 
$$\hat{u}|_{t=0} = \hat{u}_0,\hspace{0.2cm} \hat{F}|_{t=0} = \hat{F}_0. $$
and periodic boundary conditions.  We will prove:
\begin{theorem}[Zero-viscosity limit] \label{thm5.1}
Let $(u^{\nu}, F^{\nu})$ be a weak solution of \eqref{eqn:delta-nu} with initial data $(u_0^{\nu}, F_0^{\nu})$ which satisfy, as a sequence in $\nu$, 
the uniform bounds
\begin{equation*}
    u_0^{\nu} \in_b L^2(\mathbb{T}^d), \hspace{0.2cm}
   \nabla F_0^{\nu}\in_b L^2(\mathbb{T}^d), \hspace{0.2cm}
   F_0^{\nu} \in_b L^p(\mathbb{T}^d).
\end{equation*}
Then, there exists $(\hat{u}, \hat{F})$ weak solution of \eqref{eqn:delta} such that 
\begin{equation*}
    u^{\nu} \rightharpoonup \hat{u} \hspace{0.2cm} \text{weak-* in} \hspace{0.2cm}L^{\infty}\big(0,T; L^2(\mathbb{T}^d)\big),
\end{equation*}
\begin{equation}\label{convF}
    F^{\nu} \rightarrow \hat{F} \hspace{0.2cm}\text{in} \hspace{0.2cm }C\big(0,T; L^q (\mathbb{T}^d)\big),
\end{equation}
for any $q < p$.
\end{theorem}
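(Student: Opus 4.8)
The plan is a compactness argument in the spirit of Theorem~\ref{thm4.1}. The situation here is, however, more favourable: since $\delta=1$ is held fixed, the term $\tfrac{1}{2}|\nabla F|^2$ in the energy survives the limit and produces \emph{strong} compactness of $F^\nu$, so that --- in contrast to the $\delta\to0$ case --- identifying the nonlinear stress requires no monotonicity argument. First I would record the uniform bounds. The energy inequality \eqref{eqn:energy} with $\delta=1$, together with (H2), gives, as sequences in $\nu$,
\begin{equation*}
u^\nu\in_b L^\infty(0,T;L^2(\mathbb{T}^d)),\qquad F^\nu\in_b L^\infty(0,T;L^p(\mathbb{T}^d)),\qquad \nabla F^\nu\in_b L^\infty(0,T;L^2(\mathbb{T}^d)),
\end{equation*}
as well as $\sqrt{\nu}\,\nabla u^\nu\in_b L^2(0,T;L^2(\mathbb{T}^d))$. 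From $\eqref{eqn:delta-nu}_2$ it follows that $\partial_t F^\nu=\nabla u^\nu\in_b L^2(0,T;H^{-1}(\mathbb{T}^d))$, and reading $\eqref{eqn:delta-nu}_1$ as an identity in a negative-order Sobolev space --- using (H4) so that $S(F^\nu)$ is bounded in $L^\infty(0,T;L^{p/(p-1)})$, together with $\nu\Delta u^\nu=\sqrt\nu\,(\sqrt\nu\,\Delta u^\nu)\to0$ in $L^2(0,T;H^{-1})$ and $\nabla\Delta F^\nu\in_b L^\infty(0,T;H^{-2})$ --- shows that $\partial_t u^\nu$ is bounded in $L^2(0,T;H^{-m}(\mathbb{T}^d))$ for some $m=m(p,d)$.

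Next I would extract limits. Banach--Alaoglu gives $u^\nu\rightharpoonup\hat u$ weak-$\ast$ in $L^\infty(0,T;L^2)$, and combined with the bound on $\partial_t u^\nu$ the Aubin--Lions--Simon lemma \cite{simon1986compact} (in the version valid for the weak topology of $L^2$) yields $u^\nu\to\hat u$ in $C(0,T;H^{-s}(\mathbb{T}^d))$ for every $s>0$, which identifies $\hat u_0$. For $F^\nu$, the bounds $F^\nu\in_b L^\infty(0,T;H^1)$ and $\partial_t F^\nu\in_b L^2(0,T;H^{-1})$ together with the compact embedding $H^1(\mathbb{T}^d)\hookrightarrow\hookrightarrow L^{q_0}(\mathbb{T}^d)$, valid for $q_0<\tfrac{2d}{d-2}$ (any $q_0<\infty$ if $d=2$), give by Aubin--Lions--Simon that $F^\nu\to\hat F$ in $C(0,T;L^{q_0}(\mathbb{T}^d))$. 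To reach every $q<p$ I would interpolate this against the uniform bound in $L^\infty(0,T;L^p)$: for $q_0<q<p$,
\begin{equation*}
\|F^\nu-\hat F\|_{q}\le\|F^\nu-\hat F\|_{q_0}^{\theta}\,\|F^\nu-\hat F\|_{p}^{1-\theta},
\end{equation*}
with the last factor bounded uniformly in $\nu$, which proves \eqref{convF}. Along a further subsequence $F^\nu\to\hat F$ a.e.\ on $(0,T)\times\mathbb{T}^d$, and, $\nabla F^\nu$ being bounded in $L^2((0,T)\times\mathbb{T}^d)$, also $\nabla F^\nu\rightharpoonup\nabla\hat F$ weakly there.

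Then I would pass to the limit in the weak formulation of $\eqref{eqn:delta-nu}$. The linear terms of the $u$-equation are immediate: $\iint u^\nu\phi_t\to\iint\hat u\,\phi_t$ by the weak-$\ast$ convergence; $\nu\iint\nabla u^\nu\,\nabla\phi\to0$ since $\nu\nabla u^\nu=\sqrt\nu\,(\sqrt\nu\,\nabla u^\nu)\to0$ in $L^2$; and $\iint\Delta F^\nu:\nabla\phi=-\iint\nabla F^\nu:\nabla^2\phi\to-\iint\nabla\hat F:\nabla^2\phi$ by the weak $L^2$ convergence of $\nabla F^\nu$. The nonlinear term is the one point that needs care, and it is here that holding $\delta=1$ fixed pays off: continuity of $S$ and the a.e.\ convergence of $F^\nu$ give $S(F^\nu)\to S(\hat F)$ a.e., while (H4) bounds $S(F^\nu)$ uniformly in $L^\infty(0,T;L^{p/(p-1)})$ with $p/(p-1)>1$, so $\{S(F^\nu)\}$ is equi-integrable on $(0,T)\times\mathbb{T}^d$; Vitali's theorem then gives $S(F^\nu)\to S(\hat F)$ in $L^1((0,T)\times\mathbb{T}^d)$, hence $\iint S(F^\nu)\,\nabla\phi\to\iint S(\hat F)\,\nabla\phi$. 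I expect this to be the main obstacle, although it is routine once the strong convergence of $F^\nu$ is available. Passing to the limit in $\eqref{eqn:delta-nu}_2$ and $\eqref{eqn:delta-nu}_3$, and recovering $\hat F=\nabla\hat y$ from $\curl\hat F=0$ exactly as in the proof of Theorem~\ref{thm4.1} (the spatial mean of $F^\nu$ is constant in time because $\partial_t F^\nu=\nabla u^\nu$ has zero mean), shows that $(\hat u,\hat F)$ solves \eqref{eqn:delta} in the weak sense and lies in the required regularity class. Finally, if an energy inequality is part of the definition of weak solution of \eqref{eqn:delta}, it is inherited by weak lower semicontinuity of the $L^2$-norms of $u^\nu$ and of $\nabla F^\nu$ together with Fatou's lemma applied to $\int W(F^\nu)$ via the a.e.\ convergence, completing the proof.
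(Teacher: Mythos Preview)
Your proposal is correct and follows essentially the same strategy as the paper: energy bounds, Aubin--Lions--Simon to extract strong convergence of $F^\nu$, and a Vitali-type argument for the nonlinear stress. Your treatment is in fact slightly more complete --- you obtain $C(0,T;L^q)$ for all $q<p$ by interpolating $C(0,T;L^{q_0})$ against the uniform $L^\infty(0,T;L^p)$ bound (the paper's proof only writes out convergence in $C(0,T;L^2)$ and then $L^q((0,T)\times\mathbb{T}^d)$), and your direct appeal to equi-integrability via the uniform $L^{p/(p-1)}$ bound is more concise than the paper's explicit splitting into $\{|F^\nu|>M\}$ and $\{|F^\nu|\le M\}$, which amounts to reproving Vitali by hand.
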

%

 Note that the convergence \eqref{convF} can be improved by using available bounds on $\nabla F^\nu$ but we will not pursue that here.
\begin{proof}
	 The energy inequality \eqref{eqn:energy} yields the uniform bounds
    \begin{equation}
        u^{\nu} \hspace{0.1cm} \in_b \hspace{0.1cm} L^{\infty}\big(0,T; L^2(\mathbb{T}^d)\big),
    \end{equation}
    \begin{equation}
        F^{\nu}\hspace{0.1cm} \in_b \hspace{0.1cm} L^{\infty}\big(0,T; L^p(\mathbb{T}^d)\big)\cap L^{\infty}\big(0,T; H^1(\mathbb{T}^d)\big).
        \label{i}
    \end{equation}
By the Banach-Alaoglu theorem, there exist subsequences $\big\{u^{\nu}\big\}_{\nu}$ and $\big\{F^{\nu}\big\}_{\nu}$ such that
\begin{equation}
    u^{\nu} \rightharpoonup \hat{u} \hspace{0.2cm} \text{weak-* in} \hspace{0.2cm}L^{\infty}\big(0,T; L^2(\mathbb{T}^d)\big),
\end{equation}
\begin{equation}
    F^{\nu} \rightharpoonup \hat{F} \hspace{0.2cm} \text{weak-* in} \hspace{0.2cm}L^{\infty}\big(0,T; H^1(\mathbb{T}^d)\big). 
    \label{ii}
\end{equation}
From $\eqref{eqn:delta-nu}_2$ we infer 
\begin{equation}
    \partial_t F^{\nu} \in_b L^{\infty}\big(0,T; H^{-1}(\mathbb{T}^d)\big).
\end{equation}
The Aubin-Lions-Simon lemma \cite{simon1986compact} along with \eqref{i}, \eqref{ii} imply
  \begin{equation}
      F^{\nu}\rightarrow \hat{F} \hspace{0.2cm} \text{in} \hspace{0.2cm} C\big(0,T; L^2(\mathbb{T}^d)\big),
  \end{equation}
and so 
\begin{equation}
    F^\nu \rightarrow \hat{F} \hspace{0.2cm} \text{in} \hspace{0.2cm} L^q((0,T)\times \mathbb{T}^d), \hspace{0.2cm} \text{for} \hspace{0.2cm} q <p.
    \label{iii}
\end{equation}
To deal with the nonlinear term $S(F^{\nu})$, note first that by the growth hypothesis \eqref{growthderiv} we have 
\begin{equation}
    S(F^\nu) \in L^\infty(0,T; L^{\frac{p}{p-1}}(\mathbb{T}^d)).
\end{equation}
For $M >0$ and  $1 \leq r < \infty$, we have
\small
\begin{align*}
    \int_0^t \int  &|S(F^\nu)-S(\hat{F})|^r \, dxds = \int_0^t \int_{\{|F^\nu|> M\}\cup \{|F^\nu|\leq M\}} |S(F^\nu)-S(\hat{F})|^r \hspace{0.1cm} \nabla \phi \hspace{0.1cm} dxds \\
    &\leq \begin{multlined}[t][10.5cm] \int_0^t \int_{\{|F^\nu|> M\}} |S(F^\nu)|^r dxds + \int_0^t \int_{\{|F^\nu| > M\}}|S(\hat{F})|^r dxds 
    \\
    + \int_0^t \int |S(F^\nu)-S(\hat{F})|^r \hspace{0.1cm}  \mathds{1}_{\{|F^\nu| \leq M\}} dxds
    \end{multlined}\\
    &\leq \begin{multlined}[t][10.5cm]   \int_0^t\int_{\{|F^\nu > M|\}}\big(1+|F^\nu|^{r(p-1)}\big) dxds+ \int_0^t\int_{\{|\hat{F} > M|\}}\big(1+|\hat{F}|^{r(p-1)}\big)dxds\\ + \int_0^t \int |S(F^\nu)-S(\hat{F})|^r \hspace{0.1cm}  \mathds{1}_{\{|F^\nu| \leq M\}} dxds  \end{multlined}
    \\
    &= : I_1 + I_2 + I_3 \, .
\end{align*}
\normalsize
Consider first
\begin{equation}
    I_1 = \int_0^t \int_{\{|F^\nu| > M\}} \big(1+ |F^\nu|^{r(p-1)}\big) dxds 
\end{equation}
For $r < \frac{p}{p-1}$, we have $F^\nu \rightarrow \hat{F}$ in $L^{r(p-1)}((0,T)\times \mathbb{T}^d)$ and by Vitali's convergence theorem \cite[Theorem 7.13]{bartle}, the sequence $\{F^\nu\}_{\nu}$ is uniformly integrable in the $L^{r(p-1)}$ norm. That is, for $\varepsilon > 0$ there exists $\delta$ such that for every subset $\mathcal{A}$ with $|\mathcal{A}| < \delta$ we have 
\begin{equation}
    \int_0^t \int_{\mathcal{A}} \big(1 + |F^\nu|^{r(p-1)}\big) \hspace{0.1cm} dxds < \frac{\varepsilon}{3} \, .
\end{equation}
By the Chebychev inequality and \eqref{i}, we can choose $M$ suitably large such that 
\begin{equation}
    \text{meas}\{|F^\nu| \geq M\} \leq \frac{1}{M^p} \int_0^t \int_{\{|F^\nu| > M\}} |F^\nu|^p dxds \leq \frac{C}{M^p}
\end{equation}
and so for $M^p > C/\delta$ we have 
\begin{equation}
   I_1 = \int_0^t \int_{\{|F^\nu| > M\}} \big(1+ |F^\nu|^{r(p-1)}\big) dxds < \frac{\varepsilon}{3}.  
\end{equation}
Similarly, $I_2 < \frac{\varepsilon}{3}$.\\\\
For $I_3$, we first note that \eqref{iii} implies, up to a subsequence, 
\begin{equation*}
    F^\nu \rightarrow \hat{F} \hspace{0.2cm}\text{a.e. in} \hspace{0.2cm} (0,T)\times \mathbb{T}^d,
\end{equation*}
and so, 
\begin{equation*}
    S(F^\nu)\rightarrow S(\hat{F}) \hspace{0.2cm}\text{a.e. in} \hspace{0.2cm} (0,T)\times \mathbb{T}^d \, .
\end{equation*}
Moreover, we have for $r < \frac{p}{p-1}$ \\
\begin{equation*}
    |S(F^\nu)-S(\hat{F})|^r \hspace{0.1cm} \mathds{1}_{\{|F^\nu|\leq M\}} \leq 2^r \big(|S(F^\nu)|^r \mathds{1}_{\{|F^\nu|\leq M\}}+ |S(\hat{F})|^r \hspace{0.1cm}\mathds{1}_{\{|F^\nu| \leq M\}}\big)
    \leq C + |S(\hat{F})|^r \in L^1 (\mathbb{T}^d),
\end{equation*}\\
and so the dominated convergence theorem  implies that for $\varepsilon > 0$, there exists $\nu_0$ such that for $\nu > \nu_0$, 
\begin{equation}
    I_3 = \int_0^t \int |S(F^\nu)-S(\hat{F})|^r \hspace{0.1cm}  \mathds{1}_{\{|F^\nu| \leq M\}} dxds < \frac{\varepsilon}{3}.
\end{equation}
Combining the above we conclude that for $r < \frac{p}{p-1}$,
as   $\nu \rightarrow 0$ we have 
\begin{equation}
    S(F^\nu) \rightarrow S(\hat{F}) \quad \text{in} \hspace{0.1cm} L^r((0,T)\times \mathbb{T}^d) \, . 
\end{equation}
which completes the proof.
\end{proof}

\appendix
\section{Existence via Faedo-Galerkin Approximation} \label{AppB} 
We restate Theorem  \ref{thm1} here for the reader's convenience. 

\begin{theorem}
Assume the stored energy $W$ satisfies (H1)-(H3) for some $p \geq 2$. Let  $(u_0^{\nu,\delta},F_0^{\nu,\delta})\in L^2(\mathbb{T}^d) \times L^p(\mathbb{T}^d)\cap H^1 (\mathbb{T}^d)$. Then there exists a weak solution of \eqref{eqn:Visco} in the sense of Definition \eqref{def}. Moreover if $W$ additionally satisfies (H6) then we also have that $$\nabla |F^{\nu,\delta}|^{\frac{p}{2}} \in L^2\big(0,T; L^2(\mathbb{T}^d)\big).$$
\end{theorem}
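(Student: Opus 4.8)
The plan is to prove the theorem by a Faedo--Galerkin approximation of the second-order formulation \eqref{2nd-evolution}, choosing the approximating subspaces compatibly with $-\Delta$ so that the compensated-compactness computation behind \eqref{eqn:transfer} survives the discretization. Let $V_m\subset C^\infty(\mathbb{T}^d)$ be the span of the trigonometric monomials $e^{ik\cdot x}$ with $|k|_\infty\le m$; this is a finite sum of eigenspaces of $-\Delta$, hence invariant under $\nabla$, $\Div$ and $\Delta$. Using density of trigonometric polynomials and $\curl F_0^{\nu,\delta}=0$, fix $y^m_0\in V_m^d$ (up to an affine term matching the mean of $F_0^{\nu,\delta}$, if nonzero) and $u^m_0\in V_m^d$ with $u^m_0\to u_0^{\nu,\delta}$ in $L^2(\mathbb{T}^d)$ and $F^m_0:=\nabla y^m_0\to F_0^{\nu,\delta}$ in $L^p(\mathbb{T}^d)\cap H^1(\mathbb{T}^d)$. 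One then seeks $y^m(t,\cdot)\in V_m^d$ solving
\[
\int_{\mathbb{T}^d}\big(\partial_{tt}y^m\cdot v + S(\nabla y^m):\nabla v + \nu\,\nabla\partial_t y^m:\nabla v + \delta\,\Delta y^m\cdot\Delta v\big)\,dx = 0\qquad\text{for all }v\in V_m^d,
\]
with $(y^m,\partial_t y^m)|_{t=0}=(y^m_0,u^m_0)$, and sets $u^m=\partial_t y^m$, $F^m=\nabla y^m$. Since $S=DW$ is $C^2$ (by (H1)), the resulting system of ODEs for the coefficients of $y^m$ has a locally Lipschitz right-hand side, so Picard--Lindel\"of gives a unique $C^2$ solution on a maximal interval $[0,T_m)$.

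Next I would extract the two a priori bounds uniformly in $m$. Testing with $v=u^m\in V_m^d$ gives the discrete energy identity \eqref{energy-app}, hence $u^m\in_b L^\infty(0,T;L^2)\cap L^2(0,T;H^1)$, $W(F^m)\in_b L^\infty(0,T;L^1)$ and, by (H2), $F^m\in_b L^\infty(0,T;L^p)$, while for the fixed $\delta>0$ the term $\tfrac{\delta}{2}\|\nabla F^m\|_2^2$ gives $F^m\in_b L^\infty(0,T;H^1)$. Testing with $v=-\Delta y^m\in V_m^d$ reproduces exactly the identity \eqref{eqn:E} at the discrete level: this is possible precisely because $\Delta y^m=\Div F^m\in V_m^d$, so that $\partial_t F^m=\nabla u^m$, $\partial_t\Div F^m=\Delta u^m$ and $\curl F^m=0$ hold pointwise and every integration by parts performed in Section~\ref{AppA} is legitimate for the spatially smooth $y^m$. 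Adding $\tfrac{\nu}{2}$ times \eqref{eqn:E} to \eqref{energy-app} and invoking the semiconvexity (H3) as in Section~\ref{AppA} produces the discrete analogue of \eqref{eqn:transfer}; in particular $\Delta F^m\in_b L^2(0,T;L^2)$ and $\int_0^t\!\!\int_{\mathbb{T}^d}D^2\tilde{W}:(\nabla F^m,\nabla F^m)\,dxds$ is bounded uniformly in $m$. These bounds control $y^m$ in $V_m^d$, so $T_m$ can be taken arbitrarily large and the Galerkin solutions are global.

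For compactness, note $\partial_t F^m=\nabla u^m\in_b L^2(0,T;L^2)$ and, from the $u$-equation, $\partial_t u^m=\Div S(F^m)+\nu\Delta u^m-\delta\nabla\Delta F^m\in_b L^2(0,T;H^{-s})$ for $s$ large (using (H4) for $S(F^m)\in_b L^\infty(0,T;L^{p/(p-1)})$ together with the bounds on $\nabla u^m$ and $\Delta F^m$). The Aubin--Lions--Simon lemma \cite{simon1986compact} then gives, along a subsequence, $u^m\to u$ in $L^2(0,T;L^2)$ and $F^m\to F$ in $C(0,T;L^2)$, hence — by interpolation with $F^m\in_b L^\infty(0,T;L^p)$ — in $C(0,T;L^q)$ for $q<p$ and a.e.\ on $(0,T)\times\mathbb{T}^d$. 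Passing to the limit in the Galerkin weak form is then routine: linear terms pass by the weak-$*$ limits; for the nonlinearity, $S(F^m)\to S(F)$ a.e.\ by continuity of $S$, and since $S(F^m)$ is bounded in $L^\infty(0,T;L^{p/(p-1)})$ with exponent $>1$, hence equi-integrable, one concludes $S(F^m)\rightharpoonup S(F)$ and may pass to the limit against smooth test functions; the involution $\curl F^m=0$ and the relations $F^m=\nabla y^m$, $u^m=\partial_t y^m$ pass to the limit, and the chosen data converge. Finally \eqref{eqn:energy} and \eqref{eqn:transfer} follow from the discrete identities by weak lower semicontinuity, and the continuity in time is recovered by the standard combination of weak-in-time continuity with these (in)equalities. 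The step I expect to be the real obstacle is precisely the discrete reproduction of \eqref{eqn:transfer}: one must choose the Galerkin subspaces compatibly with $-\Delta$ and verify that the compensated-compactness bracket of Section~\ref{AppA} is reproduced verbatim at the Galerkin level; everything else is standard.

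It remains to obtain \eqref{extrereg} under the stronger hypothesis (H6). By (H7), $D^2\tilde{W}(F^m):(\nabla F^m,\nabla F^m)\ge c\,|F^m|^{p-2}|\nabla F^m|^2$, while
\[
\big|\nabla |F^m|^{p/2}\big|^2 = \tfrac{p^2}{4}\,|F^m|^{p-2}\,\big|\nabla |F^m|\big|^2 \le \tfrac{p^2}{4}\,|F^m|^{p-2}|\nabla F^m|^2 .
\]
Hence the uniform bound on $\int_0^t\!\!\int_{\mathbb{T}^d}D^2\tilde{W}:(\nabla F^m,\nabla F^m)$ coming from the discrete \eqref{eqn:transfer} yields $\nabla|F^m|^{p/2}\in_b L^2(0,T;L^2)$, and together with $|F^m|^{p/2}\in_b L^\infty(0,T;L^2)$ (which equals $\|F^m\|_{L^\infty(0,T;L^p)}^{p/2}$) this gives $|F^m|^{p/2}\in_b L^2(0,T;H^1)$. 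Since $F^m\to F$ a.e., $|F^m|^{p/2}\to|F|^{p/2}$ a.e., so the weak limit in $L^2(0,T;H^1)$ is $|F|^{p/2}$ and $\nabla|F^{\nu,\delta}|^{p/2}\in L^2(0,T;L^2(\mathbb{T}^d))$, as claimed.
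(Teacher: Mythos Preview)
Your proposal is correct and follows essentially the same approach as the paper: Faedo--Galerkin approximation in Fourier subspaces (so that the projection commutes with $\nabla$, $\Div$, $\Delta$ and the compensated-compactness computation of Section~\ref{AppA} survives verbatim at the discrete level), derivation of \eqref{energy-app} and \eqref{eqn:E} by testing with $u^m$ and with $\Div F^m=\Delta y^m$ respectively, Aubin--Lions--Simon compactness, passage to the limit in the nonlinearity via a.e.\ convergence plus equi-integrability (the paper phrases this as Vitali's theorem), and weak lower semicontinuity for \eqref{eqn:energy}--\eqref{eqn:transfer}. The only cosmetic difference is that you work with the second-order formulation for $y^m$ while the paper writes the first-order system with $P^N S(F^N)$; since $V_m$ is $\Delta$-invariant these are equivalent, and you correctly single out this compatibility as the key point.
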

\begin{proof}
For simplicity of notation we let $(u, F)$ denote $(u^{\nu,\delta}, F^{\nu,\delta})$ with $\nu$ and $\delta$ fixed. We consider the following Galerkin approximation of \eqref{eqn:Visco}. For $N\in \mathbb{N}$ we have
\begin{equation*}
 \tag{DVE}
    \begin{cases}
        \partial_t u^{ N} = \Div P^N (S(F^{ N})) + \nu \Delta u^{ N} - \delta \nabla \Delta F^{ N}\\
        \partial_t F^{ N} = \nabla u^{ N}\\
        \curl F^{N} = 0
    \end{cases}
    \label{eqn:dve}
\end{equation*}
with initial data 
\begin{equation}
    \tag{ID$_N$}
    u^{N}|_{t=0} = u^{N}_0, \hspace{0.3cm} F^{N}|_{t=0} = F^{ N}_0 = \nabla y^{N}_0,
    \label{idn}
\end{equation}
where $(u^{ N}, F^{ N})$ are defined for $k \in \mathbb{Z}^d$ as 
\begin{align*}
    (u^{\ N}, F^{ N}) &= \Bigg(\sum_{|k|\leq N}\hat{u}^{ N}_k(t) e^{ikx}, \sum_{|k|\leq N}\hat{F}^{ N}_k(t)e^{ikx}\Bigg) = \Bigg(\sum_{|k|\leq N}\hat{u}^{ N}_k e^{ikx}, \sum_{|k|\leq N}\hat{y}^{N}_k(t) \otimes ike^{ikx}\Bigg),
\end{align*}
and we have $\hat{u}^{ N}_k=\text{conj}(\hat{u}^{ N}_{-k}), \hspace{0.2cm} \hat{F}^{ N}_k=\text{conj}(\hat{F}^{ N}_{-k})$ where $\text{conj}(\cdot)$ denotes the complex conjugate. \\
For the nonlinear term $S(F)$ we define the projection operator 
$$P^N : L^2 (\mathbb{T}^d) \rightarrow P^N\big(L^2 (\mathbb{T}^d)\big),$$ 
which projects onto the finite-dimensional space $P^N\big(L^2 (\mathbb{T}^d)\big)$ such that 
$$u \mapsto \sum_{|k|\leq N}\hat{u}_k(t)e^{ikx}, \hspace{0.3cm} F\mapsto \sum_{|k|\leq N}\hat{F}_k(t)e^{ikx},$$
where $\hat{u}_k = \frac{1}{|\mathbb{T}^d|}\int u(t,x)e^{-ixk}$.
\begin{lemma}
    Assume that $W(F^{N} )$ satisfies (H1)-(H3). Let $(u^{ N}_0, F^{ N}_0)= (P^N u_0, P^N F_0)$ with $(u_0, F_0)\in L^{2}(\mathbb{T}^d )\times L^p (\mathbb{T}^d)$ for $p \geq 2$. Then there exists a unique smooth solution of \eqref{eqn:dve}. Moreover, the following energy estimate holds
    \begin{equation}
     \int \Big(\frac{1}{2}|u^{ N}|^2 + W(F^{N})+\frac{\delta}{2}|\nabla F^{N}|^2\Big) dx + \nu \int_0^t\int |\nabla u^{ N}|^2 dx \leq \int \Big(\frac{1}{2} |u_N^0|^2 + W(F_0^N) + \frac{\delta}{2}|\nabla F_0^N|^2\Big) dx.  
        \label{energy:n}
    \end{equation}
\end{lemma}
\begin{proof}
Noting that \eqref{eqn:dve} is equivalent to the system of ordinary differential equations 
\begin{equation*}
    \begin{cases}
    \frac{d}{dt}\hspace{0.1cm} \hat{u}^{N}_k = ik \hspace{0.1cm}\hat{S}^{N}_k (t)-\nu|k|^2 \hspace{0.1cm}\hat{u}^{N}_k(t)+i\delta k^3  \hat{F}^{N}_k(t)\\\\
      \frac{d}{dt} \hspace{0.1cm}\hat{F}^{N}_k (t) = ik \hspace{0.1cm}\hat{u}^{N}_k (t) = i\hat{u}^{N}_k(t)\otimes k
    \end{cases}
\end{equation*}
where 
\begin{equation*}
    \hat{S}^{N}_k (t) = \int S \Bigg(\sum_{|k'|\leq N}\hat{F}^{ N}_{k'} (t) e^{ikx}\Bigg) e^{-ikx}dx.
\end{equation*}
\\
By the growth conditions (H1)-(H3) and the Picard- Lindel\"{o}f theorem, there exists $T^N > 0$ such that $(u^{ N}, F^{ N})$ is the unique smooth solution of \eqref{eqn:dve} on $(0,T^N)\times \mathbb{T}^d$. Next, we derive the energy estimate \eqref{energy:n} which allows us to show continuation for $T^N = T$. Multiplying $\eqref{eqn:dve}_1$ by $u^{N}$ and integrating over $\mathbb{T}^d$ yields 
\begin{equation}
    \frac{d}{dt} \int \frac{1}{2}|u^{N}|^2dx + \int \Bigg(\nabla u^{N}: P^N \big(S(F^{N})\big)+\nu |\nabla u^{ N}|^2 + \delta u^{N} \nabla \Delta F^{ N}\Bigg) dx = 0.
    \label{es1}
\end{equation}
Next, multiply $\eqref{eqn:dve}_2$ by $P^N \big(S(F^{N})\big)$, combine with \eqref{es1} to obtain 
\begin{equation*}
    \frac{d}{dt} \int \Big(\frac{1}{2}|u^{N}|^2 + W(F^{N})\Big) dx + \nu \int |\nabla u^{N}|^2 dx + J = 0,
\end{equation*}
where 
\begin{equation*}
    J := \int \delta u^{N}\nabla \Delta F^{N} = \delta \int u^{N} \nabla \Delta F^{N} dx = \frac{\delta}{2} \frac{d}{dt} \int |\nabla F^{N}|^2 dx,
\end{equation*}
so that we have
\begin{equation}
    \frac{d}{dt} \int \Big(\frac{1}{2} |u^{N}|^2 + W(F^{N}) + \frac{\delta}{2} |\nabla F^{N}|^2 \Big) dx+ \nu \int |\nabla u^{ N}|^2 dx = 0.
    \label{energy:dve}
\end{equation}
Furthermore, (H2) along with Parseval identity imply that 

\begin{equation*}
    \sum_{|k|\leq N} \Bigg(\frac{1}{2} |u^{N}|^2 + |F^{ N}|^2 + \frac{\delta}{2}|\nabla F^{N}|^2\Bigg) dx \leq C,
\end{equation*}
and so the solution can be continued for all $T > 0$.
\end{proof}

In order to show that the approximate solution $(u^{N}, F^{N})$ converges to a solution $(u, F)$ of \eqref{eqn:Visco}, we will derive an additional estimate 
regarding the dissipative structure for the system \eqref{eqn:dve}. We multiply $\eqref{eqn:dve}_1$ by $\Div F^{N}$ and add $-|\nabla u^{N}|^2$ to both sides 
in order to obtain after some re-arrangements
\begin{align*}
	&\Div F^N \cdot \Div P^N S(F^N) - |\nabla u^N|^2  \\
	&= \partial_t \big(u^N \cdot \Div F^N\big) + \Div\big(\frac{\delta}{2}\nabla |\Div F^N|^2-u^N \cdot \nabla u^N\big)- \partial_t \big(\frac{\nu}{2}|\Div F^N|^2\big)-\delta |\Delta F^N|^2,
\end{align*}
which, following the calculations in Section \eqref{AppA}, gives 
\begin{multline*}
    \partial_t \bigg(\frac{\nu}{2}|\Div F^{N}|^2 - \big(u^{N} \cdot \Div F^{N}\big)\bigg)-\Div \bigg(-\frac{\delta}{2}\nabla |\Div F^N|^2 - u^{N} \partial_t F^{N}\bigg) \\+ \delta |\Delta F^{N}|^2 + \Div F^{N} \Div P^N S(F^{N})-|\nabla u^{N}|^2 = 0 \, .
\end{multline*}
Integrating over the torus and in time we get
\begin{multline}
    \int \Big(\frac{\nu}{2}|\Div F^{N}|^2\Big)dx - \int \big(u^{N}\cdot \Div F^{N}\big)dx + \delta \int_0^t \int |\Delta F^{N}|^2 dxds \\ + \int_0^t \int \Div F^{N}\Div P^N S(F^{N})dxds -\int_0^t \int |\nabla u^{N}|^2 dxds 
    \\
    = \int \frac{\nu}{2} |\Div F^{N}_0|^2 - \Big(u^{N}_0 \cdot\Div F^{N}_0\Big)dx
    \label{eqn:E4}
\end{multline}
Next, combine \eqref{energy:n}  with  $\frac{\nu}{2}$ multiple of  \eqref{eqn:E4} to obtain
\begin{multline*}
    \frac{d}{dt} \int_{\mathbb{T}^d} \Bigg(\frac{1}{2}|u^{N}-\frac{\nu}{2} \Div F^{N}|^2 + \frac{\nu^2}{8}|\Div F^{N}|^2 +W(F^N) + \frac{\delta}{2}|\nabla F^{N}|^2\Bigg) dx  \\+ \frac{\delta\nu}{2} \int |\Delta F^{N}|^2 dx + \frac{\nu}{2} \int \Div F^{N}\Div P^N S(F^{N}) + |\nabla u^{N}|^2 =0. 
\end{multline*}
Using the convexity of $\tilde{W}$ we get
\begin{multline*}
    \frac{d}{dt} \int_{\mathbb{T}^d} \Bigg(\frac{1}{2}|u^{N}-\frac{\nu}{2} \Div F^{N}|^2 + \frac{\nu^2}{8}|\Div F^{N}|^2 +W(F^N) + \frac{\delta}{2}|\nabla F^{N}|^2\Bigg) dx  \\+ \frac{\delta\nu}{2} \int |\Delta F^{N}|^2 dx + \frac{\nu}{2} \int \Div F^{N}\Div P^N \tilde{S}(F^{N}) + |\nabla u^{N}|^2 = \frac{K}{2} \int |\Div F|^2,
\end{multline*}
where $\tilde{S} = D \tilde{W}$. As the divergence operator and the projection $P^N$ commute, we can integrate by parts twice and use the involution $\eqref{eqn:dve}_3$ to obtain 
\begin{align}
\int \Div \Big(P^N \tilde S(F^{N})\Big) \Div F^{N} dx &= \int \frac{\partial^2 \Tilde{W}(F^N)}{\partial F^N_{i\alpha} \partial F^N_{k\gamma}} \partial_{\beta} F_{k\gamma}^N \partial_{\alpha}F^N_{i\alpha}\\
&= \int \frac{\partial^2 \Tilde{W}(F^N)}{\partial F^N_{i\alpha} \partial F^N_{k\gamma}} \partial_\beta F^N_{k\gamma} \partial_\beta F^N_{i\alpha} \geq 0
\label{semiconvexity1} 
\end{align}
as $\Tilde{W}$ is convex and so $D^2\Tilde{W} \geq 0$. Integrating in time we get
\begin{multline}
    \int \Big(\frac{1}{2}|u^{N}-\frac{\nu}{2} \Div F^{N}|^2 + \frac{\nu^2}{8}|\nabla F^{N}|^2 +W(F^N) + \frac{\delta}{2}|\nabla F^{N}|^2\Big) dx + \frac{\delta\nu}{2} \int_0^t \int |\Delta F^{N}|^2 dxds \\+ \frac{\nu}{2} \int_0^t\int \Big(\Div F^{N}\Div \tilde{S}(F^{N}) + |\nabla u^{N}|^2\Big)dx ds\leq \frac{K}{2} \int_0^t \int |\nabla F^{N}|^2 dx ds\\+ \int \Big(\frac{1}{2}|u^N_0 - \frac{\nu}{2}\Div F^N_0|^2 + \frac{\nu^2}{8}|\nabla F_0^N|^2 + W(F_0^N)+\frac{\delta}{2}|\nabla F^N_0|^2\Big)dx.
      \label{transfer:n}
\end{multline}
As $W(F^N)$ grows like an $L^p$ norm, the estimates \eqref{energy:n} and \eqref{transfer:n} give the following uniform bounds (for fixed $\nu$ and $\delta$)
\begin{equation}
    F^{N} \in_{b} C\big(0,T; L^p(\mathbb{T}^d)\big),
   \label{bound f1}
\end{equation}
\begin{equation}
    u^N \in_{b} C\big(0,T; L^2(\mathbb{T}^d)\big) \, .
\end{equation}
Moreover, $\eqref{eqn:dve}_2$ gives 
\begin{equation}
    \partial_t F^{N} \in_b L^{\infty}\big(0,T; H^{-1}(\mathbb{T}^d)\big),
\end{equation}
then using  $\eqref{eqn:dve}_1$, the growth condition on $S(F^N)$, and the Sobolev embedding $W^{-1, \frac{p}{p-1}}(\mathbb{T}^d)\supset W^{-1,2}(\mathbb{T}^d)$, we conclude
\begin{equation}
    \partial_t u^{N} \in_b L^2\big(0,T; W^{-1,\frac{p}{p-1}}(\mathbb{T}^d)\big) \, .
\end{equation}
We next apply the Aubin-Lions-Simon lemma \cite{simon1986compact} to both $\{ u^N \}$ and $\{ F^N \}$ to arrive at
\begin{equation}
    u^{N} \rightarrow u \hspace{0.5cm} \text{in} \hspace{0.2cm} L^2\big(0,T; L^2 (\mathbb{T}^d)\big),
    \label{conv-u}
\end{equation}
\begin{equation}
    F^{N} \rightarrow F \hspace{0.5cm} \text{in} \hspace{0.2cm} C\big(0,T; L^2 (\mathbb{T}^d)\big).
    \label{conv-f}
\end{equation}
Moreover, for any $\phi \in H^s (\mathbb{T}^d)$, $s > \frac{d}{2}+1$ such that $\|\phi\|_{H^s} \leq 1$ we have 
\begin{equation*}
   P^N \phi \rightarrow \phi \hspace{0.2cm} \text{in} \hspace{0.2cm} L^q((0,T)\times \mathbb{T}^d) \hspace{0.2cm}\forall \hspace{0.1cm} q  < \infty.
\end{equation*}
For the nonlinear term $S(F^{N})$, 
given $F^{N} \rightarrow F$ $a.e.$ and $S(F^{N}) \in_b L^{\frac{p}{p-1}}(\mathbb{T}^d)$, by the Vitali's convergence theorem \cite[Theorem 7.13]{bartle}, we get 
\begin{equation*}
 S(F^{N}) \rightarrow S(F) \hspace{0.2cm} \text{in} \hspace{0.2cm} L^r \hspace{0.2cm} \text{for} \hspace{0.2cm} r < \frac{p}{p-1},
\end{equation*}

Next, we show that as $N \rightarrow \infty$, \eqref{energy:n} and \eqref{transfer:n} converge, respectively, to the energy estimate \eqref{eqn:energy} and the dissipative structure estimate \eqref{eqn:transfer}.\\
Note that \eqref{energy:n} and \eqref{transfer:n} additionally give the following uniform bounds 
\begin{equation}
  \sqrt{\nu} F^{N} \in_{b} C\big(0,T; H^1(\mathbb{T}^d)\big), \hspace{0.2cm} \sqrt{\delta} F^N  \in_{b} C\big(0,T; H^1(\mathbb{T}^d)\big),
    \label{bound f2}
\end{equation}
\begin{equation}
    \sqrt{\nu \delta} \hspace{0.1cm}\nabla F^N \in_b \hspace{0.1cm} L^2(0,T; H^1(\mathbb{T}^d)),
    \label{conv-gradf}
\end{equation}
\begin{equation}
    \sqrt{\nu} \nabla u^N \in_b \hspace{0.1cm} L^2(0,T; L^2(\mathbb{T}^d)).
    \label{conv-gradu}
\end{equation}
Moreover, 
\begin{equation}
    \partial_t \nabla F^N \in_b \hspace{0.1cm} L^2(0,T; H^{-2}(\mathbb{T})^d).
    \label{time-der-f}
\end{equation}
From \eqref{conv-gradf} and \eqref{time-der-f} and the use of, again, the Aubin-Lions-Simon lemma we get 
\begin{equation}
    \nabla F^N \rightarrow \nabla F \hspace{0.5cm} \text{in} \hspace{0.2cm} L^2(0,T; L^2(\mathbb{T}^d)).
    \label{gradF-L2}
\end{equation}
Furthermore, \eqref{conv-gradu} implies, up to a subsequence, 
\begin{equation}
    \sqrt{\nu} \hspace{0.1cm} \nabla u^N \rightharpoonup \sqrt{\nu}\hspace{0.1cm} \nabla u \hspace{0.5cm} \text{weakly in} \hspace{0.2cm} L^2((0,T)\times \mathbb{T}^d),
\end{equation}
and so we have the lower semicontinuity property 
\begin{equation}
    \int_0^t \int_{\mathbb{T}^d} |\nabla u|^2 dxds \leq \liminf_{N\rightarrow \infty} \int_0^t \int_{\mathbb{T}^d} |\nabla u^N|^2 dxds.
    \label{lsc-u}
\end{equation}
For $W(F^N)$, we note that the growth conditions imply 
\begin{equation}
    \int |W(F^N)-W(F)| dx \leq \Bigg(\int \big(1+|F|^{p-1}+ |F^N|^{p-1}\big)^{\frac{p}{p-1}}dx\Bigg)^{\frac{p-1}{p}} \Bigg(\int |F^N-F|^p\Bigg)^{\frac{1}{p}}
\end{equation}
and so given \eqref{bound f1}, we have that 
\begin{equation}
    W(F^N) \rightarrow W(F) \hspace{0.3cm} \text{in} \hspace{0.1cm} L^1(\mathbb{T}^d).
    \label{conv-W}
\end{equation}
So that as $N \rightarrow \infty$ and given \eqref{conv-u}, \eqref{conv-f}, \eqref{gradF-L2}, \eqref{lsc-u} and \eqref{conv-W}, we obtain 
 \begin{equation*}
    \int_{\mathbb{T}^d} \frac{1}{2} |u|^2 + W(F) + \frac{\delta}{2} |\nabla F|^2 dx + \int_0^t\int_{\mathbb{T}^d}\nu| \nabla u|^2 dx ds 
    \leq \int_{\mathbb{T}^d}  \frac{1}{2} |u_0|^2 + W(F_0) +\frac{\delta}{2} |\nabla F_0|^2 dx. 
\end{equation*}
Next, to show that the dissipative structure estimate \eqref{eqn:transfer} is recovered in the limit $N \rightarrow \infty$, we additionally need to show the lower semicontinuity of the term 
\begin{align*}
    \mathcal{J}[F^N] := \frac{\nu}{2} \int_0^t \int_{\mathbb{T}^d}\Div F^N \Div \Tilde{S}(F^N) dxds &= \frac{\nu}{2} \int_0^t \int_{\mathbb{T}^d} D^2 \Tilde{W}(F^N) :(\nabla F^N, \nabla F^N) dxds\\ &= \frac{\nu}{2}\int_0^t \int_{\mathbb{T}^d} \Xi(F^N, \nabla F^N)  dxds.
\end{align*}
where we invoke \cite[\S 8.2, Theorem 1]{evans}. To that end, we identify $F_{i\alpha}^N$ with $f_j$ and denote 
\begin{equation}
 D^2 \Tilde{W}(F^N) :(\nabla F^N, \nabla F^N) := Q(f):(\nabla f, \nabla f),   
\end{equation}
which in indices reads, 
\begin{equation*}
    \sum_{\substack{i,\alpha \\ j,\beta}} \frac{\partial^2 \Tilde{W}}{\partial F_{i\alpha} \partial F_{j\beta}} \partial_\gamma F_{i\alpha} \partial_\gamma F_{j\beta} = \sum_{m,n} Q_{mn}(f) \big(\partial_\gamma f_m \partial_\gamma f_n\big) 
    = \Xi(f, \nabla f).
\end{equation*}
As $\Tilde{W}$ is convex, $Q$ is positive semidefinite and so $\Xi(f, \nabla f) \geq 0$. Moreover, the mapping $\nabla f \mapsto \Xi(\cdot, \nabla f)$ is convex. By \cite[\S 8.2, Theorem 1]{evans}. we conclude that $\mathcal{J}[F^N]$ is lower semicontinuous and 
\begin{equation}
    \frac{\nu}{2} \int_0^t\int_{\mathbb{T}^d} D^2 \Tilde{W}(F) : (\nabla F, \nabla F) dxds \leq \liminf_{N\rightarrow \infty} \frac{\nu}{2} \int_0^t \int_{\mathbb{T}^d} D^2 \Tilde{W}(F^N) :(\nabla F^N, \nabla F^N) dxds
    \label{lsc-D2W}
\end{equation}
and so as $N\rightarrow \infty$, given \eqref{conv-u}, \eqref{conv-f}, \eqref{gradF-L2}, \eqref{lsc-u}, \eqref{conv-W}, and \eqref{lsc-D2W}, the dissipative structure estimate is recovered.
\\\\Finally, if $W$ satisfies also (H6) then (H7) is implied and we have that 
\begin{equation}
 \int |\nabla |F^{N}|^{\frac{p}{2}}|^2 \lsim \int \frac{\partial^2 \Tilde{W}(F^{N})}{\partial F_{i\beta}^{N} \partial F_{j\gamma}^{N}}\partial_\alpha F_{i\beta}^{N} \partial_\alpha F_{j\gamma}^{N}. 
 \label{semiconvexity2}
\end{equation}
Then \eqref{semiconvexity1} and \eqref{semiconvexity2} imply the following estimate 
\begin{align}
    \int \Big(\frac{1}{2}|u^{N} &-\frac{\nu}{2} \Div F^{N}|^2 + \frac{\nu^2}{8}|\nabla F^{N}|^2 +W(F^N) + \frac{\delta}{2}|\nabla F^{N}|^2\Big) dx 
    \nonumber
    \\
    &+ \frac{\delta\nu}{2} \int_0^t \int |\Delta F^{N}|^2 dxds 
    + \frac{\nu}{2}\int_0^t\int |\nabla |F^N|^{\frac{p}{2}}|^2+ \frac{\nu}{2} \int_0^t\int |\nabla u^{N}|^2dxds 
    \nonumber
    \\
    &\leq \frac{K}{2} \int_0^t \int |\nabla F^{N}|^2 dxds 
    \nonumber
    \\
    &+ \int \Big(\frac{1}{2}|u^N_0 - \frac{\nu}{2}\Div F^N_0|^2 + \frac{\nu^2}{8}|\nabla F_0^N|^2 + W(F_0^N)+\frac{\delta}{2}|\nabla F^N_0|^2\Big)dx.
    \label{grad F}
\end{align}
Now, \eqref{grad F} implies that (for $\nu$ fixed)
\begin{equation}
    \{\sqrt{\nu} \hspace{0.1cm} \nabla |F^{N}|^{\frac{p}{2}}\}_N \subset L^2\big(0,T; L^2(\mathbb{T}^d)\big) \, ,
\end{equation}
which along with the bounds for $\{F^N\}$, \eqref{conv-f}, and use of Vitali's convergence theorem implies  that 
\begin{equation}
   \nabla |F|^{\frac{p}{2}} \in L^2\big(0,T; L^2(\mathbb{T}^d)\big).
\end{equation}
\end{proof}
\section{Auxiliary results}\label{AppC}
In this section we will outline, without proof, some results that were used throughout the paper. In the first lemma, we revisit the well-known Gagliardo-Nirenberg-Sobolev interpolation inequality as well as the critical Sobolev inequality, for the case of two-space dimensions, results that are used in the
analysis of Section \ref{sec3}.
\begin{lemma} 
Let  $f \in H^1(\mathbb{T}^2)$. Then the following inequalities hold: 
\begin{enumerate}
    \item There exists a constant $C > 0$ such that for any $r >1$ and $q \geq 2$ 
    $$\|f\|_{\frac{rq}{q-1}}\leq C_r \frac{q}{q-1}\|f\|_r^{1-\frac{1}{q}}\|f\|_{H^1}^{\frac{1}{q}}.$$
    \item There exists a constant $C>0$ such that for any $q >1$ 
    $$\|f\|_q \leq C \sqrt{q}\|f\|_{H^1}.$$
\end{enumerate}
 \label{gag}
\end{lemma}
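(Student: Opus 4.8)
The plan is to handle the two assertions in order, obtaining the interpolation inequality (1) from the critical embedding (2) together with a single fixed Gagliardo--Nirenberg inequality. Both statements are classical and for the purposes of the paper may be cited directly; the sketch below indicates the shortest self-contained route.

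For (2) --- the critical two-dimensional embedding $H^1(\mathbb{T}^2)\hookrightarrow L^q$ with the sharp $\sqrt q$ growth of the constant --- I would argue via the Bessel potential. Set $F:=(1-\Delta)^{1/2}f\in L^2(\mathbb{T}^2)$, so that $\|F\|_{L^2}=\|f\|_{H^1}$ and $f=K\ast F$, where $K$ is the kernel of $(1-\Delta)^{-1/2}$ on $\mathbb{T}^2$. Since $K$ inherits the local behavior of the $\mathbb{R}^2$ Bessel potential, $|K(x)|\lsim |x|^{-1}$ near $x=0$ and $K$ is smooth away from the origin, one has $K\in L^m(\mathbb{T}^2)$ for every $m<2$ with $\|K\|_{L^m}\lsim (2-m)^{-1/m}$. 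Applying Young's convolution inequality with $\tfrac1q+1=\tfrac1m+\tfrac12$, i.e.\ $m=\tfrac{2q}{q+2}$ and hence $2-m=\tfrac{4}{q+2}$, gives $\|f\|_{L^q}\le\|K\|_{L^m}\,\|F\|_{L^2}\lsim (q+2)^{1/m}\|f\|_{H^1}\lsim \sqrt q\,\|f\|_{H^1}$ for $q\ge2$, while the range $1<q\le2$ is immediate since $|\mathbb{T}^2|=1$. (Alternatively one may Taylor-expand the Moser--Trudinger inequality $\int_{\mathbb{T}^2}e^{\alpha|f|^2/\|f\|_{H^1}^2}dx\le C_0$ to get $\|f\|_{L^{2k}}^{2k}\le C_0\,k!\,\alpha^{-k}$ and invoke Stirling's formula, then interpolate in $q$.)

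For (1), put $s:=\tfrac{rq}{q-1}$ and observe that $q\ge2$ forces $s\in(r,2r]$. By H\"older interpolation $\|f\|_{L^s}\le\|f\|_{L^r}^{1-\lambda}\|f\|_{L^{2r}}^{\lambda}$ with $\tfrac1s=\tfrac{1-\lambda}{r}+\tfrac{\lambda}{2r}$, which gives $\lambda=\tfrac2q$. Inserting the fixed two-dimensional Gagliardo--Nirenberg inequality $\|f\|_{L^{2r}}\le C_r\|f\|_{L^r}^{1/2}\|f\|_{H^1}^{1/2}$ (the endpoint $\theta=\tfrac12$, whose constant depends on $r$ only) yields $\|f\|_{L^s}\le C_r^{2/q}\|f\|_{L^r}^{1-1/q}\|f\|_{H^1}^{1/q}$; since $q\ge2$ we have $C_r^{2/q}\le\max\{1,C_r\}$, and since $\tfrac{q}{q-1}\ge1$ the factor $\tfrac{q}{q-1}$ may be inserted at no cost, producing exactly the stated bound. (On $q\ge2$ this factor is bounded by $2$; it is displayed in the form convenient for the Yudovich-type estimate of Section~\ref{sec3}.)

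The only genuine obstacle is the constant-tracking in (2): verifying that the embedding constant grows no faster than $\sqrt q$. Both routes above accomplish this, the Bessel-kernel one reducing it to $\|K\|_{L^m}\lsim(2-m)^{-1/m}$ together with $2-m=\tfrac4{q+2}$. Everything else is bookkeeping --- the mild point that the periodic Bessel kernel carries the same local singularity as on $\mathbb{R}^2$ (or else one simply cites the classical critical Sobolev inequality), and the elementary algebra of the interpolation exponents in (1). Finally, since the underlying domain is the torus and no mean-zero normalization is imposed, the full norm $\|f\|_{H^1}$ appears throughout, in agreement with the statement.
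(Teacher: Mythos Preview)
Your sketch is correct, but note that the paper does not actually prove this lemma: Appendix~\ref{AppC} explicitly presents it ``without proof'' and simply refers to \cite{kozono2008remarks} and \cite{lieb2001analysis} for the constants. Your argument therefore goes well beyond what the paper does. The Bessel-potential/Young route you give for (2) is one of the standard derivations of the $\sqrt{q}$ growth, and your derivation of (1) --- H\"older interpolation between $L^r$ and $L^{2r}$ followed by the fixed two-dimensional Gagliardo--Nirenberg bound $\|f\|_{2r}\le C_r\|f\|_r^{1/2}\|f\|_{H^1}^{1/2}$ --- is clean and yields exactly the stated exponents. Your remark that the factor $\tfrac{q}{q-1}$ is bounded by $2$ on $q\ge2$ and hence cosmetic is also accurate; its presence in the statement is only for bookkeeping in the Yudovich-type argument of Section~\ref{sec3}.
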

The exact constants can be deduced from the references \cite{kozono2008remarks} and \cite{lieb2001analysis}.\\\\
The next lemma gives the maximal $L^p$ regularity for the heat equation on the $d$-dimensional torus. A proof of this classical result can be found in \cite{ladyzhenskaya1968linear} for the case of the entire space and in \cite{koumatos2020existence} for the case of the torus.
\begin{lemma}[]
For a smooth function $G$, let $u$ be a smooth solution of the following initial value problem:
$$\partial_t u - \Delta u = \Div G \hspace{0.2cm} \text{in} \hspace{0.2cm} (0,T)\times \mathbb{T}^d$$ 
$$u|_{t=0} = 0 \hspace{0.2cm} \text{on} \hspace{0.2cm} \{t=0\}\times \mathbb{T}^d.$$
Then, for any $t\in (0,T)$ and for any $1 < r< \infty$ 
$$\int_0^t \|\nabla u\|_r^r \leq C \int_0^t \|G\|_r^r,$$
where $C = C(r,d).$
\label{lp regularity}
\end{lemma}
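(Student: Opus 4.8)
The plan is to recognize the stated estimate as the maximal $L^{p}$-regularity estimate for the heat semigroup on $\mathbb{T}^{d}$ and to prove it by exhibiting the solution operator $G\mapsto\nabla u$ as a Fourier multiplier --- equivalently, a parabolic Calder\'on--Zygmund operator --- on $\mathbb{R}\times\mathbb{T}^{d}$, and then invoking Mikhlin-type multiplier theory; the case $r=2$ is elementary, and the content lies in the passage to a general exponent $1<r<\infty$. Since the equation decouples over the components of $u$, I may take $u$ scalar and $G$ vector valued. First I would extend $G$ by zero to $t\in\mathbb{R}\setminus(0,T)$ and set $\tilde u(t)=\int_{-\infty}^{t}e^{(t-s)\Delta}\,\Div G(s)\,ds$ on $\mathbb{R}\times\mathbb{T}^{d}$; by Duhamel's formula $\tilde u=u$ on $(0,T)$, and since that formula is causal in time, the restriction of $\tilde u$ to $(0,t)$ depends only on $G\,\mathds{1}_{(0,t)}$. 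Because $\tilde u$ is now a convolution in $t$ against the operator-valued kernel $\sigma\mapsto\mathds{1}_{\sigma>0}\,e^{\sigma\Delta}\Div$, the Fourier transform in $(t,x)\in\mathbb{R}\times\mathbb{T}^{d}$ turns the estimate into $\widehat{\nabla\tilde u}(\tau,k)=m(\tau,k)\,\widehat G(\tau,k)$ with the explicit symbol $m_{j\ell}(\tau,k)=-\,k_{j}k_{\ell}/(i\tau+|k|^{2})$ for $k\in\mathbb{Z}^{d}\setminus\{0\}$ and $m:=0$ at $k=0$. Since the numerator is dominated by $|k|^{2}$ and hence by the modulus of the denominator, $|m_{j\ell}(\tau,k)|\le1$, so Plancherel's theorem already gives the bound when $r=2$.

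The heart of the argument is that $m$ is an $L^{r}(\mathbb{R}\times\mathbb{T}^{d})$-multiplier with norm depending only on $r$ and $d$ for every $1<r<\infty$. I would deduce this from the corresponding statement on $\mathbb{R}\times\mathbb{R}^{d}$ via the transference principle of de~Leeuw, according to which the restriction of an $\mathbb{R}^{d}$-multiplier to the lattice $\mathbb{Z}^{d}$ does not increase the $L^{r}$-multiplier norm (after a harmless smoothing of $m$ near the origin, which does not change its values at $k\in\mathbb{Z}^{d}$ since $|k|\ge1$ there). It then remains to check that $m(\tau,\xi)=-\,\xi\otimes\xi/(i\tau+|\xi|^{2})$ on $\mathbb{R}\times\mathbb{R}^{d}$ is a Mikhlin--H\"ormander multiplier adapted to the parabolic dilations $\delta_{\lambda}(\tau,\xi)=(\lambda^{2}\tau,\lambda\xi)$: the denominator $i\tau+|\xi|^{2}$ vanishes only at the origin, so $m\in C^{\infty}(\mathbb{R}^{1+d}\setminus\{0\})$, and $m$ is homogeneous of degree $0$ under $\delta_{\lambda}$, whence $|(\tau\partial_{\tau})^{a}(\xi\cdot\nabla_{\xi})^{b}m|\lsim 1$; the Mikhlin--H\"ormander theorem for non-isotropic dilations then yields $L^{r}(\mathbb{R}^{1+d})$-boundedness for all $1<r<\infty$. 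Alternatively one can argue entirely on the torus: the operator is convolution against $\mathds{1}_{\sigma>0}$ times the Hessian of the periodic heat kernel, which obeys the size bound $|K(\sigma,x)|\lsim(\sigma^{1/2}+\mathrm{dist}_{\mathbb{T}^{d}}(x,0))^{-(d+2)}$ together with the matching first-order derivative and cancellation ($\int_{\mathbb{T}^{d}}K(\sigma,\cdot)\,dx=0$) estimates, so the Benedek--Calder\'on--Panzone theorem in the parabolic space of homogeneous type upgrades the $L^{2}$-bound to $L^{r}$; or one may simply invoke the bounded $H^{\infty}$-calculus of $-\Delta$ on the UMD space $L^{r}(\mathbb{T}^{d})$ together with the Kalton--Weis (or Dore--Venni) maximal-regularity theorem.

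Combining these ingredients, for $t\le T$ one gets $\int_{0}^{t}\|\nabla u\|_{r}^{r}=\|\nabla\tilde u\|_{L^{r}((0,t)\times\mathbb{T}^{d})}^{r}\le\|m\|_{M_{r}}^{r}\,\|G\,\mathds{1}_{(0,t)}\|_{L^{r}(\mathbb{R}\times\mathbb{T}^{d})}^{r}=C(r,d)\int_{0}^{t}\|G\|_{r}^{r}$, which is the asserted estimate. The main obstacle is the genuinely harmonic-analytic ingredient, namely the $L^{r}$-boundedness of the parabolic singular integral for $r\ne2$: one must respect the non-isotropic (parabolic) scaling throughout the verification of the multiplier or kernel conditions. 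By contrast, the periodic setting contributes no real difficulty --- it is absorbed either by the transference principle or by the Gaussian bounds for the periodic heat kernel --- and the fact that the right-hand side involves only the truncated integral $\int_{0}^{t}\|G\|_{r}^{r}$ is handled purely by the causality of Duhamel's formula in time.
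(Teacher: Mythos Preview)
Your argument is correct: the solution map $G\mapsto\nabla u$ is indeed a parabolic Calder\'on--Zygmund operator with symbol $m(\tau,k)=-k\otimes k/(i\tau+|k|^{2})$, homogeneous of degree zero under the anisotropic dilations $(\tau,\xi)\mapsto(\lambda^{2}\tau,\lambda\xi)$, and the passage from $L^{2}$ to $L^{r}$ via Mikhlin--H\"ormander (together with de~Leeuw transference to handle the torus) or via the Benedek--Calder\'on--Panzone kernel criterion is the standard route. The causality remark correctly localizes the estimate to $(0,t)$.

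As for comparison with the paper: the paper does not actually prove this lemma. It states the result and defers to the references \cite{ladyzhenskaya1968linear} for the whole space and \cite{koumatos2020existence} for the torus. So you have supplied a genuine proof where the paper only cites one. The approach in \cite{koumatos2020existence} proceeds along lines very close to yours (Fourier-side representation on $\mathbb{R}\times\mathbb{T}^{d}$ and multiplier/transference arguments), so your sketch is in fact aligned with what the cited reference does; your third alternative via the $H^{\infty}$-calculus of $-\Delta$ on $L^{r}(\mathbb{T}^{d})$ and abstract maximal-regularity theory is a somewhat more modern packaging of the same content and has the advantage of avoiding explicit kernel verification, at the cost of invoking heavier operator-theoretic machinery.
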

Finally, we state Osgood's lemma which provides a generalization of Gr\"{o}nwall's inequality. We refer the reader to \cite{chemin1995fluides} for the proof.
\begin{lemma}[Osgood's Lemma.\cite{jean1996remark}]
Let $\rho$ be a positive Borel measurable function, $\gamma$ a locally integrable positive function, and $\mu$ a continuous increasing function. Let us assume that, for some strictly positive number $\alpha$, the function $\rho$ satisfies 
$$\rho(t) \leq \alpha + \int_{t_0}^t \gamma(s) \mu(\rho(s))ds.$$
Then we have 
$$-\mathcal{M}(\rho(t)) + \mathcal{M}(\alpha) \leq \int_{t_0}^t \gamma(s)ds \hspace{0.2cm} \text{with} \hspace{0.2cm} \mathcal{M} = \int_x^1 \frac{dr}{\mu(r)}.$$
\label{osgood}
\end{lemma}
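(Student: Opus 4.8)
The plan is to dominate $\rho$ by an absolutely continuous majorant and then run a differential comparison through the nonincreasing function $\mathcal{M}$. First I would set
\[
R(t) := \alpha + \int_{t_0}^t \gamma(s)\,\mu(\rho(s))\,ds ,
\]
which we take to be finite (as is implicit in the hypothesis). By construction $R$ is locally absolutely continuous, $R(t_0)=\alpha>0$, $R(t)\ge\alpha$ for $t\ge t_0$, and $\rho(t)\le R(t)$ for all $t\ge t_0$. Since $\gamma$ is locally integrable, $R'(t)=\gamma(t)\,\mu(\rho(t))$ for a.e.\ $t$, and because $\mu$ is nondecreasing and $\rho\le R$ this yields the a.e.\ differential inequality $R'(t)\le\gamma(t)\,\mu(R(t))$.

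Next I would differentiate $t\mapsto\mathcal{M}(R(t))$. On any compact subinterval of $(0,\infty)$ containing the range of $R$ — the range lies in $[\alpha,\sup R]$ with $\alpha>0$ — the function $1/\mu$ is continuous (using $\mu>0$), so $\mathcal{M}$ is $C^1$ there with $\mathcal{M}'(x)=-1/\mu(x)$, in particular Lipschitz; composing this with the absolutely continuous $R$ keeps $\mathcal{M}\circ R$ absolutely continuous, so the chain rule and the fundamental theorem of calculus apply. For a.e.\ $t$,
\[
\frac{d}{dt}\,\mathcal{M}(R(t)) = -\frac{R'(t)}{\mu(R(t))} \ge -\gamma(t),
\]
by the differential inequality above. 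Integrating from $t_0$ to $t$ and using $R(t_0)=\alpha$ gives $\mathcal{M}(R(t))-\mathcal{M}(\alpha)\ge-\int_{t_0}^t\gamma(s)\,ds$, i.e.\ $\mathcal{M}(\alpha)-\mathcal{M}(R(t))\le\int_{t_0}^t\gamma(s)\,ds$.

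Finally I would transfer the bound from $R$ back to $\rho$: since $\mathcal{M}'=-1/\mu<0$, $\mathcal{M}$ is nonincreasing, so $\rho(t)\le R(t)$ forces $\mathcal{M}(\rho(t))\ge\mathcal{M}(R(t))$, whence
\[
-\mathcal{M}(\rho(t))+\mathcal{M}(\alpha) \le -\mathcal{M}(R(t))+\mathcal{M}(\alpha) \le \int_{t_0}^t\gamma(s)\,ds ,
\]
which is exactly the conclusion. (This reproduces the argument in Chemin \cite{chemin1995fluides}, to which one may also simply refer.)

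The only genuine obstacle is the measure-theoretic bookkeeping: $\rho$ is only Borel measurable and $\mu$ only continuous and increasing, so one cannot differentiate $\mathcal{M}(\rho(t))$ directly nor invoke a standard ODE existence theory. Passing to the majorant $R$ resolves this, since $R$ is locally absolutely continuous and bounded below by $\alpha>0$, confining the whole analysis to a region where $\mathcal{M}$ is genuinely $C^1$; no approximation in $\alpha$ is required.
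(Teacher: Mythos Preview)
Your argument is correct and is exactly the standard proof: pass to the absolutely continuous majorant $R$, use monotonicity of $\mu$ to get $R'\le\gamma\,\mu(R)$, and integrate after composing with $\mathcal{M}$. The paper does not give its own proof of this lemma but simply refers to \cite{chemin1995fluides}, which is precisely the source you reproduce, so there is nothing further to compare.
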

\section*{Acknowledgments}
S. Spirito is partially supported by INdAM-GNAMPA, by the projects PRIN 2020 “Nonlinear evolution PDEs, fluid dynamics and transport equations: theoretical foundations and applications”, PRIN2022 “Classical equations of compressible fluids mechanics: existence and properties of non-classical solutions”, and PRIN2022-PNRR “Some mathematical approaches to climate change and its impacts”. The research of A.E. Tzavaras and A. AlNajjar is partially supported by King Abdullah University
of Science and Technology (KAUST), baseline funds No. BAS/1/1652-01-01.
\bibliographystyle{siam}

\begin{thebibliography}{10}

\bibitem{AK1991}
{\sc R.~Abeyaratne and  J.K.~Knowles}, 
{\em Kinetic relations and the propagation of phase boundaries in solids}, Arch. Rational Mech. Analysis, 114,(1991), 119 - 154.

\bibitem{andrews1982asymptotic}
{\sc G.~Andrews and J.~M. Ball}, {\em Asymptotic behaviour and changes of phase in one-dimensional nonlinear viscoelasticity}, Journal of Differential Equations, 44 (1982), pp.~306--341.

\bibitem{antman98}
{\sc S.~S. Antman}, {\em Physically unacceptable viscous stresses}, Z. Angew.
  Math. Phys., 49 (1998), pp.~980--988.


\bibitem{antonelli2019compactness}
{\sc P.~Antonelli and S.~Spirito}, {\em On the compactness of weak solutions to the {N}avier--{S}tokes--{K}orteweg equations for capillary fluids}, Nonlinear Analysis, 187 (2019), pp.~110--124.

\bibitem{antonelli2022global}
{\sc P.~Antonelli and S.~Spirito}, {\em Global existence of weak solutions to the {N}avier--{S}tokes--{K}orteweg equations}, Annales de l'Institut Henri Poincar{\'e} C, 39 (2022), pp.~171--200.
\bibitem{bartle}
{\sc R.-G. Bartle}, {\em The elements of integration and Lebesgue measure}, John Wiley \& Sons, 2014.
\bibitem{bian2014vanishing}
{\sc D.~Bian, L.~Yao, and C.~Zhu}, {\em Vanishing capillarity limit of the compressible fluid models of {K}orteweg type to the {N}avier--{S}tokes equations}, SIAM Journal on Mathematical Analysis, 46 (2014), pp.~1633--1650.


\bibitem{boldrini1986elasticity}
{\sc J.~L. Boldrini}, {\em Is elasticity the proper asymptotic theory for materials with small viscosity and capillarity?}, Proceedings of the Royal Society of Edinburgh Section A: Mathematics, 103 (1986), pp.~99--127.

\bibitem{bresch2019navier}
{\sc D.~Bresch, M.~Gisclon, and I.~Lacroix-Violet}, {\em On {N}avier--{S}tokes--{K}orteweg and {E}uler--{K}orteweg systems: application to quantum fluids models}, Archive for Rational Mechanics and Analysis, 233 (2019), pp.~975--1025.
\bibitem{burtea2022existence}
{\sc C.~Burtea and B.~Haspot}, {\em Existence of global strong solution for the
  {N}avier--{S}tokes--{K}orteweg system in one dimension for strongly degenerate
  viscosity coefficients}, Pure and Applied Analysis, 4 (2022), pp.~449--485.
\bibitem{chemin1995fluides}
{\sc J.-Y. Chemin}, {\em Fluides parfaits incompressibles}, Soci{\'e}t{\'e} math{\'e}matique de France, 1995.

\bibitem{jean1996remark}
{\sc J.-Y. Chemin},  {\em A remark on the inviscid limit for two-dimensional incompressible fluids}, Communications in partial differential equations, 21 (1996), pp.~1771--1779.

\bibitem{Demoulini00}
S. Demoulini, 
Weak solutions for a class of nonlinear systems of viscoelasticity,
{\it Arch. Rational Mech. Analysis\/} {\bf 155} (2000), 299-334.
\bibitem{evans}
{\sc L.-C. Evans}, {\em Partial differential equations}, vol.~19, American Mathematical Society, 2022.
\bibitem{friesecke1997implicit}
{\sc G.~Friesecke and G.~Dolzmann}, {\em Implicit time discretization and global existence for a quasi-linear evolution equation with nonconvex energy}, SIAM Journal on Mathematical Analysis, 28 (1997), pp.~363--380.
\bibitem{GL2016}
{\sc P.~Germain and P.~LeFloch}, {\em Finite energy method for compressible
  fluids: the {N}avier-{S}tokes-{K}orteweg model}, Comm. Pure Appl. Math., 69
  (2016), pp.~3--61.
\bibitem{hagan1983viscosity}
{\sc R.~Hagan and M.~Slemrod}, {\em The viscosity-capillarity criterion for shocks and phase transitions}, Archive for rational mechanics and analysis, 83 (1983), pp.~333--361.

\bibitem{hattori1986riemann}
{\sc H.~Hattori}, {\em The {R}iemann problem for a {v}an der {W}aals fluid with entropy rate admissibility criterion—isothermal case}, Archive for Rational Mechanics and Analysis, 92 (1986), pp.~247--263.

\bibitem{rybka1998convergence}
{\sc K.-H. Hoffmann and P.~Rybka}, {\em Convergence of solutions to the equation of quasi-static approximation of viscoelasticity with capillarity}, Journal of mathematical analysis and applications, 226 (1998), pp.~61--81.

\bibitem{hoffmann2000convergence}
{\sc K.-H. Hoffmann and P.~Rybka},
{\em On convergence of solutions to the equation of viscoelasticity with capillarity}, Communications in Partial Differential Equations, 25 (2000), pp.~1845--1890.

\bibitem{johannes1901korteweg}
{\sc D.~J. Korteweg}, {\em Sur la forme que prennent les {\'e}quations des mouvements des fluides si l’on tient compte des forces capillaires par des variations de densit{\'e}}, Arch. N{\'e}er. Sci. Exactes S{\'e}r. II, 6 (1901), pp.~1--24.

\bibitem{koumatos2020existence}
{\sc K.~Koumatos, C.~Lattanzio, S.~Spirito, and A.~E. Tzavaras}, {\em Existence and uniqueness for a viscoelastic {K}elvin--{V}oigt model with nonconvex stored energy}, Journal of Hyperbolic Differential Equations, 20 (2023), pp.~433--474.

\bibitem{kozono2008remarks}
{\sc H.~Kozono and H.~Wadade}, {\em Remarks on {G}agliardo--{N}irenberg type inequality with critical {S}obolev space and {BMO}}, Mathematische Zeitschrift, 259 (2008), pp.~935--950.

\bibitem{ladyzhenskaya1968linear}
{\sc O.~Ladyzhenskaya, V.~Solonnikov, and N.~Uraltseva}, {\em Linear and quasilinear parabolic equations of second order}, Translation of Mathematical Monographs, AMS, Rhode Island,  (1968).

\bibitem{LLT2013}
{\sc A. Larios, E. Lunasin and E.S. Titi},  {\em Global well-posedness for the 2D Boussinesq system with anisotropic viscosity and without
heat diffusion}, { J. Differential Equations}, 255 (2013), 2626-2654.

\bibitem{lieb2001analysis}
{\sc E.~H. Lieb and M.~Loss}, {\em Analysis}, vol.~14, American Mathematical Soc., 2001.

\bibitem{Rybka1992} 
{\sc P. Rybka}, Dynamical modelling of phase transitions by means of viscoelasticity in many dimensions, {\it Proc. R. Soc. Edinburgh A}, {\bf 121} (1992), 101-138


\bibitem{serrin1980phase}
{\sc J.~Serrin}, {\em Phase transitions and interfacial layers for {v}an der {W}aals fluids}, in Proceedings of SAFA IV Conference, Recent Methods in Nonlinear Analysis and Applications, Naples, 1980.


\bibitem{shearer1986nonuniqueness}
{\sc M.~Shearer}, {\em Nonuniqueness of admissible solutions of {R}iemann initial value problems for a system of conservation laws of mixed type}, Archive for Rational Mechanics and Analysis, 93 (1986), pp.~45--59.

\bibitem{simon1986compact}
{\sc J.~Simon}, {\em Compact sets in the space {$L^p(0,T ; B)$}}, Annali di Matematica pura ed applicata, 146 (1986), pp.~65--96.


\bibitem{slemrod1983admissibility}
{\sc M.~Slemrod}, {\em Admissibility criteria for propagating phase boundaries in a {v}an der {W}aals fluid}, Archive for Rational Mechanics and Analysis, 81 (1983), pp.~301--315.

\bibitem{Truski1982}
{\sc L.~M. Truskinovskii}, {\em Equilibrium phase boundaries}, Doklady Akademii
  Nauk, 265 (1982), pp.~306--310.
  
\bibitem{yudovich1963non}
{\sc V.~I. Yudovich}, {\em Non-stationary flow of an ideal incompressible liquid}, USSR Computational Mathematics and Mathematical Physics, 3 (1963), pp.~1407--1456.
\end{thebibliography}

\end{document}